\newcommand{\excise}[1]{}
\newcommand\<{\langle}
\newcommand\NN{\mathbb{N}}
\newcommand\RR{\mathbb{R}}
\newcommand\ZZ{\mathbb{Z}}
\newcommand\EE{\mathbb{E}}
\newcommand{\T}{\mathrm{{\scriptstyle T}}}
\newcommand\MM{\mathcal{M}}
\renewcommand\>{\rangle}
\DeclareMathOperator{\eig}{eig}
\DeclarePairedDelimiter\floor{\lfloor}{\rfloor}
\newif\ifhighlightchanges
\newcommand{\changedreviewerone}[1]{\ifhighlightchanges\textcolor{red}{#1}\else#1\fi}
\newcommand{\changedreviewertwo}[1]{\ifhighlightchanges\textcolor{blue}{#1}\else#1\fi}
\newcommand{\changedrevieweronetwo}[1]{\ifhighlightchanges\textcolor{orange}{#1}\else#1\fi}
\begin{document}

\title{Inference for Gaussian Processes with Mat\'ern Covariogram on Compact Riemannian Manifolds}

\author{\name Didong Li \email didongli@unc.edu \\
       \addr Department of Biostatistics\\
       University of North Carolina at Chapel Hill\\
       Chapel Hill, NC 27599, USA
       \AND
       \name Wenpin Tang \email wt2319@columbia.edu \\
       \addr Department of Industrial Engineering and Operations Research,\\
       Columbia University \\
      New York, NY 10027, USA
		\AND
		\name Sudipto Banerjee \email sudipto@ucla.edu\\
		\addr   Department of Biostatistics\\
		University of California, Los Angeles\\
		Los Angeles, CA 90095 USA
}

\editor{Marc Peter Deisenroth}

\maketitle

\begin{abstract}
Gaussian processes are widely employed as versatile modelling and predictive tools in spatial statistics, functional data analysis, computer modelling and diverse applications of machine learning. They have been widely studied over Euclidean spaces, where they are specified using covariance functions or covariograms for modelling complex dependencies. There is a growing literature on Gaussian processes over Riemannian manifolds in order to develop richer and more flexible inferential frameworks for non-Euclidean data. While numerical approximations through graph representations have been well studied for the Mat\'ern covariogram and heat kernel, the behaviour of asymptotic inference on the parameters of the covariogram has received relatively scant attention. We focus on asymptotic behaviour for Gaussian processes constructed over compact Riemannian manifolds. Building upon a recently introduced Mat\'ern covariogram on a compact Riemannian manifold, we employ formal notions and conditions for the equivalence of two Mat\'ern Gaussian random measures on compact manifolds to derive \changedreviewertwo{the parameter that is identifiable, also known as the microergodic parameter,} and formally establish the consistency of the maximum likelihood estimate and the asymptotic optimality of the best linear unbiased predictor. The circle is studied as a specific example of compact Riemannian manifolds with numerical experiments to illustrate and corroborate the theory.

\end{abstract}

\begin{keywords}
 Equivalence of Gaussian measures, Identifiability and consistency, Laplace--Beltrami operator, Microergodic parameters.
\end{keywords}

\section{Introduction}

Gaussian processes are pervasive in spatial statistics, functional data analysis, computer modelling and machine learning applications because of the flexibility and richness they \changedreviewertwo{allow} in modelling complex dependencies \citep{rasm08, stein2012interpolation, gelfand2010handbook, cressie2011statistics, banerjee2014hierarchical}. For example, in spatial statistics Gaussian processes are widely used to model spatial dependencies in geostatistical models and perform spatial prediction or interpolation (``kriging'') \citep{matheron1963principles}. In non-parametric regression models Gaussian processes are used to model unknown functions and, specifically in Bayesian contexts, act as priors over functions \citep{ghosal2017fundamentals}. A typical modelling framework assumes $y(x) = \mu(x) + Z(x) + \epsilon(x)$ for inputs $x$ (e.g., spatial coordinates; functional inputs) over a domain ${\cal D}$, where $y(x)$ is a dependent variable of interest, $\mu(x)$ is a mean function, $Z(x)$ is a \changedreviewertwo{zero-mean} Gaussian process and $\epsilon(x)$ is a noise process\footnote{This article does not consider the noise process, which introduces additional difficulties that are beyond the scope of the current manusript; see \cite{tang2019identifiability} for related developments in Euclidean space.}. These frameworks can also be adapted to deal with discrete outcomes and applied to classification problems \citep{bernardo1998regression}. 
Gaussian processes are also being increasingly employed in deep learning and reinforcement learning \citep{damianou2013deep, deisenroth2013gaussian}. The current manuscript focuses upon inferential properties of $Z(x)$ when ${\cal D}$ is not necessarily Euclidean but a compact Riemannian manifold.

A Gaussian process is determined by its covariogram, also known as the covariance function. In Euclidean space, the Mat\'{e}rn covariogram \citep{matern86} is especially popular in spatial statistics and machine learning \citep[see, e.g.,][for an extensive discussion on the theoretical properties of the Mat\'ern covariogram]{stein2012interpolation}. A key attraction of the Mat\'{e}rn covariogram is the availability of a smoothness parameter for the process. Several simpler covariograms, such as the exponential, arise as special cases of the Mat\'ern. 

This article is motivated by the emergence of non-Euclidean data, especially manifold data, in a variety of scientific fields over the last decade. As a consequence, inference for Gaussian processes on manifolds have been attracting attention in spatial statistics and machine learning in settings where the data generating process is more appropriately modelled over non-Euclidean spaces. Taking climate science as an example, geographic data involving geopotential height, temperature and humidity are measured at global scales and are more appropriately treated as (partial) realisations of a spatial process over a sphere \citep[see, e.g.,][]{banerjee2005geodetic, junstein2008, jeong2015class}. Data arising over domains with irregular shapes or examples in biomedical imaging where the domain is a three-dimensional shape of an organ comprise other examples where inference for Gaussian processes over manifolds will be relevant \citep[see, e.g.,][and references therein]{gao2019gaussian}. Motivated by isotropic covariograms in Euclidean space, it is natural to replace Euclidean distance by an appropriate geodesic distance to define a ``Mat\'ern" covariogram on Riemannian manifolds. However, this formal generalisation \changedreviewertwo{is not valid for the squared exponential covariogram, or Mat\'ern with $\nu=\infty$ \citep{feragen2015geodesic}, unless the manifold is flat. For Mat\'ern with $\nu\in(1/2,\infty)$, this naive generalisation is not even valid on the sphere \citep{gneiting2013strictly}}. Recently, valid covariograms for smooth Gaussian processes on general Riemannian manifolds have been constructed based upon heat equations, Brownian motion and diffusion models on manifolds \citep{castillo2014thomas, niu2019intrinsic, dunson2020diffusion}. However, these covariograms lack flexibility, especially in terms of modelling smoothness.

\cite{whittle1963stochastic} proposed a new representation of GP by stochastic partial differential equations. Following this path, \cite{lindgren2011explicit} introduced a ``Mat\'ern" family on generic compact Riemannian manifolds with three parameters \changedreviewerone{involved} in the covariogram. Since such Mat\'ern covariograms involve the spectrum of the Laplace-Beltrami operator, a numerical approximation to the covariogram is needed for most nontrivial manifolds. There is a rich literature focusing on approximations to the covariogram using tools from harmonic analysis, graph Laplacians, and stochastic partial differential equations \citep{sanz2020spde, sanz2021finite}. However, the study of statistical inference for the parameters in the Mat\'ern covariogram remains relatively sparse. 


In Euclidean domains $\mathbb{R}^{d}$ with $d\leq 3$, while not all parameters in the Mat\'ern covariogram are consistently estimable within the paradigm of ``fixed-domain'' or ``in-fill'' asymptotic inference \citep[see, e.g.][]{stein2012interpolation, zhang2004inconsistent}, \changedreviewertwo{certain parameters, customarily referred to as microergodic parameters, which can identify Gaussian processes specified by Mat\'ern covariograms are consistently estimable} (see Section~\ref{sec: GP_Euclidean}). Furthermore, the maximum likelihood estimator of the spatial variance under any misspecified decay parameter is consistently and asymptotically normally distributed \citep{du2009fixed, kaufman2008covariance, wang2011fixed}, while predictive inference is also asymptotically optimal using maximum likelihood estimators \citep{kaufman2013role}. Recently, \cite{BFFP19} and \cite{MB19} considered more general classes of covariance functions outside of the Mat\'ern family and studied the consistency and asymptotic normality of the maximum likelihood estimator for the corresponding microergodic parameters.

Our current contribution develops asymptotic inference for a flexible and rich Mat\'ern-type covariogram on compact Riemannian manifolds. 
We review the Mat\'ern covariogram (Section~\ref{subsec: matern_compact_riemannian_manifold}) on general compact Riemannian manifolds from the perspective of stochastic partial differential equations with reasonably tractable covariograms and spectral densities \citep{NEURIPS2020_92bf5e62}. 
Our specific results emanate from a sufficient and necessary condition for the equivalence of two Gaussian random measures on compact Riemannian manifolds with Mat\'ern or squared exponential covariograms (Section~\ref{subsec: identifiability}). 
We subsequently establish (Section~\ref{subsec: consistency_mle}) that for Gaussian measures with Mat\'ern covariograms \changedreviewertwo{the smoothness parameter is identifiable, while the spatial variance and decay parameters are not identifiable when $d\leq 3$, where $d$ is the dimension of the manifold. For $d\geq 4$, all three parameters are identifiable}. 
For squared exponential covariograms on manifolds with arbitrary dimension, we show that \changedreviewertwo{both parameters are identifiable}. Again, this problem is still open in Euclidean spaces. For Mat\'ern covariograms on manifolds with $d\leq 3$, we formally establish that the maximum likelihood estimate of the spatial variance with a misspecified decay parameter is still consistent. Next, we turn to predictive inference (Section~\ref{subsec: prediction}) and 
show that for any misspecified decay parameter in the Mat\'ern covariogram, the best linear unbiased predictor derived from the maximum likelihood estimate is asymptotically optimal. Finally, for spheres with dimension less than $4$, we explicitly study the Mat\'ern covariogram, the microergodic parameter, the consistency of the maximum likelihood estimate and the optimality of the best linear unbiased predictor (Section~\ref{sec:sphere}).
Proofs and mathematical details surrounding our main results are provided in the Appendix. 

\section{Gaussian Processes in Euclidean spaces}\label{sec: GP_Euclidean}
Let $Z = \{Z(x) : x\in \MM\subset \RR^d\}$ be a zero-mean Gaussian process on a bounded domain $\MM$.  The process $Z(\cdot)$ is characterised by its covariogram $k(x,y) = \mathbb{E}(Z(x) Z(y))$, $x, y \in \MM$ so that for any finite collection of points, say $x_1,\cdots, x_n\in \MM$, we have $\left(Z(x_1),\cdots,Z(x_n)\right)^{\T}\sim \mathcal{N}(0,\Sigma)$, where $\Sigma$ is the $n\times n$ covariance matrix with $(i,j)$-th entry $\Sigma_{ij} = k(x_i, x_j)$. The Mat\'ern process is a zero-mean stationary Gaussian process specified by the covariogram\footnote{\cite{solin2019know} provides an alternative definition based on PDEs with boundary conditions.},
\begin{equation}
	\label{eq:materncovr}
	k(x,y)=\frac{\sigma^2\left(\alpha\|x-y\|\right)^{\nu}}{\Gamma(\nu)2^{\nu-1}}K_\nu\left(\alpha\|x-y\|\right),~~x,y\in \MM\subset\RR^d,
\end{equation}
where $\sigma^2 > 0$ is called the partial sill or spatial variance, $\alpha > 0$ is the {scale or decay parameter}, $\nu > 0$ is a smoothness parameter, $\Gamma(\cdot)$ is the Gamma function, and $K_{\nu}(\cdot)$ is the modified Bessel function of the second kind of order $\nu$ \cite[Section 10]{AS65}.
The Mat\'ern covariogram in \eqref{eq:materncovr} is isotropic and its spectral density \changedreviewertwo{(also known as the Hankel-Fourier transform, \cite{genton2001classes}) is given by}
$$f(u)=\frac{\sigma^2\alpha^{2\nu}}{\pi^{d/2}(\alpha^2+u^2)^{\nu+d/2}},~~u\geq 0.$$
\subsection{Identifiability}\label{subsec: identifiability_euclidean} Let $P_0$ and $P_1$ be Gaussian measures corresponding to Mat\'ern parameters  $\{\sigma_0^2,\alpha_0,\nu\}$ and $\{\sigma_1^2,\alpha_1,\nu\}$, respectively. Two measures are said to be equivalent, denoted by $P_0\equiv P_1$, if they are absolutely continuous with respect to each other. Two equivalent measures cannot be distinguished \changedreviewertwo{no matter how dense the observations are.} \cite{zhang2004inconsistent} showed that when $d<4$, $P_0$ is equivalent to $P_1$ if and only if $\sigma_0^2\alpha_0^{2\nu}=\sigma_1^2\alpha_1^{2\nu}$. Hence, $\sigma^2$ and $\alpha$ do not admit asymptotically consistent estimators, while $\sigma^2\alpha^{2\nu}$, also known as a \emph{microergodic parameter}, is consistently estimable. For $d > 4$, \cite{A10} proved that both $\sigma^2$ and $\alpha$ are consistently estimable. The case for $d = 4$ remains unresolved. The integral test offers a sufficient (but not necessary) condition on the spectral densities to determine whether two measures are equivalent. \changedreviewertwo{While unidentifiable parameters are never consistently estimable, identifiable parameters may be consistently estimable. However, deriving an explicit construction for such a consistent estimator is often challenging and is beyond the scope of the current manuscript; we identify this as an area of future research.}

\subsection{Parameter estimation}\label{subsec: parameter_estimation_euclidean}
In practice, the maximum likelihood estimate is customarily used to estimate unknown parameters in the covariogram. Let $L_n(\sigma^2,\alpha)$ be the likelihood function:
\begin{equation}\label{eqn:lkhd}
	L_n(\sigma^2,\alpha)=(2\pi\sigma^2)^{-n/2}\det(\Gamma_n(\alpha))^{-1/2}\exp\left\{-\frac{1}{2\sigma^2}Z_n^\T \Gamma_n(\alpha)^{-1}Z_n\right\}\;,
\end{equation}
where $Z_n=(Z(x_1),\cdots,Z(x_n))^{\T}$ and $(\Gamma_n(\alpha))_{i,j}=\frac{\left(\alpha\|x_i-x_j\|\right)^{\nu}}{\Gamma(\nu)2^{\nu-1}}K_\nu\left(\alpha\|x_i-x_j\|\right)$ is independent of $\sigma^2$. Given $\alpha$, the maximum likelihood estimation of $\sigma^2$ is given by \citep{stein2012interpolation}
$$\widehat{\sigma}^2 = \frac{Z_n^\T\Gamma_n(\alpha)^{-1}Z_n}{n}.$$
Let $\{\sigma_0^2,\alpha_0\}$ be the data generating parameters with observations $Z(x_1),\cdots,Z(x_n)$. For any misspecified $\alpha_1$, if $\widehat{\sigma}_{1,n}^2$ is the maximum likelihood estimation of $L_n(\sigma^2,\alpha_1)$, then $\widehat{\sigma}_{1,n}^2\alpha_1^{2\nu}\to \sigma_0^2\alpha_0^{2\nu}$  as $n\to\infty$ with probability $1$ under $P_0$ when $\cup_{n=1}^\infty\{x_n\}$ is bounded and infinite \citep{zhang2004inconsistent,kaufman2008covariance}.
Moreover, $\sqrt{n}\left(\frac{\widehat{\sigma}_{1,n}^2\alpha_1^{2\nu}}{\sigma_0^2\alpha_0^{2\nu}}-1\right)\to \mathcal{N}(0,2)$ as $n\to \infty$ \citep{du2009fixed,wang2011fixed,kaufman2013role}. As a result, even if we do not know the true parameters $\{\alpha_0,\sigma^2_0\}$, we can choose an arbitrary, possibly misspecified, decay parameter $\alpha_1$ and find the maximum likelihood estimate of the spatial variance $\widehat{\sigma}^2_{1,n}$. The resulting Gaussian measure is asymptotically equivalent to the Gaussian measure corresponding to the true parameter.

\subsection{Prediction and kriging}\label{subsec: prediction_kriging_euclidean}
Gaussian processes are widely deployed in spatial or nonparametric regression models to carry out model-based predictive inference. Given a new location $x_0$, the best linear unbiased predictor (BLUP) for $Z_0=Z(x_0)$ is given by
$$\widehat{Z}_n(\alpha)=\gamma_n(\alpha)^\T\Gamma_n(\alpha)^{-1}Z_n,$$
where $(\gamma_n(\alpha))_i = \frac{\left(\alpha\|x_0-x_i\|\right)^{\nu}}{\Gamma(\nu)2^{\nu-1}}K_\nu\left(\alpha\|x_0-x_i\|\right)$. Then 
$$\frac{\EE_{\sigma_0^2,\alpha_0}(\widehat{Z}_n(\alpha_1)-Z_0)^2}{\EE_{\sigma_0^2,\alpha_0}(\widehat{Z}_n(\alpha_0)-Z_0)^2}\xrightarrow{n\to\infty} 1,~~\frac{\EE_{\widehat{\sigma}_{1,n}^2,\alpha_1}(\widehat{Z}_n(\alpha_1)-Z_0)^2}{\EE_{\sigma_0^2,\alpha_0}(\widehat{Z}_n(\alpha_1)-Z_0)^2}\xrightarrow{n\to\infty} 1,$$
\changedreviewerone{where $\EE$ is the expectation with respect to the measure characterised by the parameter or spectral density (see Section~\ref{sec:MaternMfd}) in the subscript.}
As a result, any misspecified $\alpha$ still yields an asymptotically \changedreviewertwo{optimal BLUP} as long as $\sigma^2$ is replaced by its maximum likelihood estimate \citep{stein1993simple, kaufman2013role}. In the current manuscript, we develop parallel results for the $d$ dimensional compact Riemannian manifold $\MM$.

\section{Gaussian processes on compact Riemannian manifold}\label{sec:MaternMfd}
Henceforth, we assume that our domain of interest is a $d$-dimensional compact Riemannian manifold $\MM$ equipped with a Riemannian metric $g$. 
We denote the Laplace--Beltrami operator on $\MM$ by $-\Delta_g$ with eigenvalues $\lambda_n$ and eigenfunctions $f_n$, the volume form by $\mathrm{d}V_g$ and the volume of $\MM$ by $V_\MM$ \citep[see, e.g.,][for further details on operators and spectral theory on Riemannian manifolds]{kobayashi1963foundations,lee2018introduction,carmo1992riemannian}.

\subsection{Mat\'ern covariogram on compact Riemannian manifolds}\label{subsec: matern_compact_riemannian_manifold}

On a Riemannian manifold, where the linear structure of $\RR^d$ is missing, the standard definition of the Mat\'ern covariogram is no longer valid. A natural extension of the Mat\'ern covariogram to manifolds will consider replacing the Euclidean norm $\|x-y\|$ in (\ref{eq:materncovr}) by the geodesic distance $d(x,y)$. Unfortunately, this naive generalisation \changedreviewertwo{is not valid for $\nu=\infty$ \citep{feragen2015geodesic}, unless the manifold is flat}. If we restrict ourselves to spheres, \changedreviewertwo{Mat\'ern with $\nu\in(1/2,\infty)$ is still invalid \citep{gneiting2013strictly}}. Instead, some Mat\'ern-like covariograms including chordal, circular and Legendre Mat\'ern covariograms and other families of covariograms have been studied \citep{jeong2015covariance, porcu2016spatio, guinness2016isotropic, guella2018strictly,de2018regularity,alegria2021f}. However, these covariograms are constructed specifically with respect to the geometry of the sphere and do not generalise to generic compact Riemannian manifolds.   

\cite{whittle1963stochastic} showed that the Mat\'ern covariogram in Euclidean space admits a representation through a stochastic partial differential equation involving white noise and the Laplace operator $\Delta$. 
\cite{lindgren2011explicit} built on this stochastic partial differential equation approach to define the Mat\'ern covariogram on manifolds involving the \changedreviewertwo{Laplace--Beltrami} operator $\Delta_g$. This idea was further developed, both theoretically and practically, by several scholars \citep[see, e.g.,][among others]{bolin2011spatial,lang2015isotropic, herrmann2020multilevel,NEURIPS2020_92bf5e62, borovitskiy2021matern}. We state the definition of the Mat\'ern covariogram in the stochastic partial differential equation sense, which is a valid positive definite function for any $\nu$ on any compact Riemannian manifold $\MM$. 
\begin{definition}\label{def:mfd}
	Let $f_l$ be the \changedreviewertwo{orthonormal} eigenfunctions of $-\Delta_g$ and $\lambda_l\geq 0$ be the corresponding eigenvalues in ascending order. The Mat\'ern covariogram is defined by 
	$$k(x,y)=\frac{\sigma^2}{C_{\nu,\alpha}}\sum_{l=0}^\infty \left(\alpha^2+\lambda_l\right)^{-\nu-\frac{d}{2}}f_l(x)f_l(y),$$
	where $\displaystyle C_{\nu,\alpha}=\sum_{l=0}^\infty (\alpha^2+\lambda_l)^{-\nu-d/2}$ is a constant such that the average variance is $\sigma^2 = \frac{1}{V_\MM}\int_\MM k(x,x)\mathrm{dV_g}(x).$
	The corresponding spectral density is $$\rho(l) = \frac{\sigma^2}{C_{\nu,\alpha}}(\alpha^2+\lambda_l)^{-\nu-\frac{d}{2}}.$$
	Similarly, the squared exponential covariogram is
	$$k(x,y)=\frac{\sigma^2}{C_{\infty,\alpha}}\sum_{l=0}^\infty e^{-\frac{\lambda_l}{2\alpha^2}}f_l(x)f_l(y),$$
	where $\displaystyle C_{\infty,\alpha}=\sum_{l=0}^\infty e^{-\frac{1}{2\alpha^2}\lambda_l}$ is a constant such that the average variance is $\sigma^2 = \frac{1}{V_\MM}\int_\MM k(x,x)\mathrm{dV_g}(x).$
	The corresponding spectral density is 
	$$ \rho(n) = \frac{\sigma^2}{C_{\infty,\alpha}}e^{-\frac{\lambda_l}{2\alpha^2}}.$$
\end{definition}

\changedreviewertwo{
\begin{remark}
There are several commonly used parametric representations of the Mat\'ern covariogram. In particular, 
this article 
adopts %
the same 
parametric representation %
as the one in \cite{zhang2004inconsistent}, but different from \cite{borovitskiy2021matern}.
\end{remark}
}
If $\MM$ is a sphere, the covariograms defined above \changedreviewertwo{coincide with the Mat\'ern-like covariograms on spheres provided by \cite{guinness2016isotropic} and \cite{kirchner2020necessary}. As a result, we focus on a non-trivial generalisation to generic compact Riemannian manifolds}. The relation between the three parameters $(\alpha,\sigma^2, \nu)$ in the above definition and the coefficients in the stochastic partial differential equation representation is not straightforward \citep[see][for details]{lindgren2011explicit}. Note that for any $(\alpha,\sigma^2,\nu)$, the covariogram shares the same eigenbasis with the \changedreviewertwo{Laplace--Beltrami} operator $\Delta_g$. This property is not deemed restrictive for our ensuing development since we primarily focus on the Mat\'ern and squared exponential covariograms. Furthermore, this property offers crucial analytic tractability for several results developed subsequently. Hence, we refer to the Mat\'ern and squared exponential covariograms as in Definition~\ref{def:mfd} in the following sections.

\subsection{Identifiability}\label{subsec: identifiability}

In Euclidean domains, the integral test \citep{yadrenko1983spectral, stein2012interpolation} is a powerful tool to determine the equivalence of two Gaussian measures. However, such tests do not carry through to non-Euclidean domains as the spectrum on such manifolds is discrete. \cite{alegria2021f} studied the so called $\mathcal{F}-$family of covariograms on spheres and numerically deduced, without proof, the consistency of the maximum likelihood estimate of some parameters for this family. \cite{arafat2018equivalence} derived the equivalence of Gaussian measures on spheres and derived microergodic parameters of some covariograms excluding the Mat\'ern. All of the above results are built upon the Feldman--H\'ajek Theorem \citep{da2014stochastic}, which is valid for any metric space and, hence, applicable to compact Riemannian manifolds. Here, we generalise the above results to a Gaussian process with Mat\'ern and squared exponential covariograms on arbitrary compact Riemannian manifolds, also motivated by the Feldman--H\'ajek theorem. Therefore, we can still study the identifiability of these parameters by finding the microergodic parameters. 

\begin{lemma}\label{lem:test}
 Let $P_i$ ($i=1,2$) be mean zero Mat\'ern/squared exponential Gaussian random measures with spectral densities $\rho_i$. Then, $P_1\equiv P_2$ if and only if
	$$\sum_{l}\left|\frac{\rho_2(l)-\rho_1(l)}{\rho_1(l)}\right|^2<\infty.$$
\end{lemma}
\begin{proof}
	See Appendix \ref{app:lem:test}.
\end{proof}
From Definition~\ref{def:mfd}, $\rho_i$ is strictly positive so the denominator is always non-zero. The series test is a sufficient and necessary condition. This is a significant enhancement over the integral test in Euclidean spaces, which offers only a sufficient condition. Its importance to us will become clear after Theorem~\ref{thm:mfd}. Subsequently, we consider microergodic parameters of Gaussian processes on a manifold with the Mat\'ern covariogram. This is analogous to Theorem~2 in \cite{zhang2004inconsistent} for compact Riemannian manifolds.   

\begin{theorem}\label{thm:mfd}
	Let $P_i$, $i=1,2$, denote two Gaussian measures with the Mat\'ern covariogram parametrized by $\theta_i=\{\sigma^2_i,\alpha_i,\nu_i\}$. Then the following results hold.
	\begin{enumerate}[(A)]
		\item If $d\leq 3$, then  $P_{1}\equiv P_{2}$ if and only if $\sigma_1^2/C_{\nu_1,\alpha_1}=\sigma_2^2/C_{\nu_2,\alpha_2}$, $\nu_1=\nu_2$.
		\item If $d\geq 4$, then $P_{1}\equiv P_{2}$ if and only if $\sigma_1^2=\sigma_2^2$ and $\alpha_1=\alpha_2$, $\nu_1=\nu_2$. 
	\end{enumerate} 
\end{theorem}
\begin{proof}
	See Appendix \ref{app:thm:mfd}.
\end{proof}
Part~(A) of Theorem~\ref{thm:mfd} implies that if $d\leq 3$, then neither $\sigma^2$ nor $\alpha$ are \changedreviewertwo{identifiable or consistently estimable, while $\nu$ is identifiable. Part~(B) implies that when $d\geq 4$, all three parameters---$\sigma^2$,  $\alpha$ and $\nu$---are identifiable}. In Euclidean space, the smoothness parameter $\nu$ is typically assumed to be known and fixed when discussing fixed-domain asymptotic inference. \changedreviewertwo{In this specific Euclidean setting,} assuming $\nu_1=\nu_2=\nu$, (A) still holds while (B) holds for $d>4$; $d=4$ is still an unresolved problem in Euclidean space unless the domain is assumed to be bounded \citep{bolin2021equivalence}. This difference in behaviour between (A)~and~(B) can be attributed to the integral test being a sufficient condition in Euclidean spaces, which ensures only the equivalence of measures when $d\leq 3$; \citep[see][for details]{zhang2004inconsistent}. In $d>4$, \cite{A10} estimated the principal irregular term without the integral test and constructed consistent estimators for $\alpha$ and $\sigma^2$ directly. However, this construction does not hold for $d=4$. 

In contrast, the series test in Lemma~\ref{lem:test} is a \emph{sufficient and necessary} condition so that we can provide a condition for the equivalence of two measures with Mat\'ern covariograms over any dimension. The dimension also plays an important role in the manifold setting due to Weyl's Law \citep{li1987isaac, canzani2013analysis}. That is, the growth of the eigenvalues and their multiplicities are intertwined with the dimension $d$; further details are provided within the proof in Appendix~\ref{app:thm:mfd}. 
Another benefit of the sufficient and necessary condition is that the series test can be applied to the squared exponential covariogram, also known as the radial basis function, which can be viewed as a limiting case of the Mat\'ern covariogram when $\nu\to\infty$, \changedreviewertwo{as introduced in Definition \ref{def:mfd}}. 
Since the spectral density is not a polynomial, the integral test over Euclidean domains is invalid and the conditions for the equivalence of two squared exponential covariograms are intractable. In contrast, the following theorem resolves the equivalence of squared exponential covariograms on a compact manifold $\MM$.

\begin{theorem}\label{thm:rbf}
	Let $P_i$, for $i=1,2$, be Gaussian measures with squared exponential covariograms parametrised by $\theta_i = \{\sigma^2_i,\alpha_i\}$. Then $P_{1}\equiv P_{2}$ if and only if $\sigma_1^2=\sigma_2^2$ and $\alpha_1 = \alpha_2$.
\end{theorem}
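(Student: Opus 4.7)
The plan is to reduce everything to the series test, Theorem~\ref{thm:test}, by directly computing the ratio of the two spectral densities. Writing
\[
\frac{\rho_2(n)}{\rho_1(n)} \;=\; \frac{\sigma_2^2\,C_{\alpha_1}}{\sigma_1^2\,C_{\alpha_2}}\,\exp\!\left(\frac{\lambda_n}{2}\Bigl(\frac{1}{\alpha_1^2}-\frac{1}{\alpha_2^2}\Bigr)\right),
\]
I would show that the quantity $\bigl|(\rho_2(n)-\rho_1(n))/\rho_1(n)\bigr|^2$ is bounded below by a strictly positive constant for all large $n$ whenever $(\sigma_1^2,\alpha_1)\neq(\sigma_2^2,\alpha_2)$. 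Since $L^2(\MM)$ is infinite-dimensional, $\sum_n t(n)=\infty$, and this will force the series $\sum_n t(n)\bigl|(\rho_2(n)-\rho_1(n))/\rho_1(n)\bigr|^2$ to diverge, so the series test yields $P_1\not\equiv P_2$. The converse is trivial because if the parameters coincide then $\rho_1\equiv\rho_2$ and the series vanishes.

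For the case analysis, I would split into three situations. If $\alpha_1=\alpha_2$ but $\sigma_1^2\neq\sigma_2^2$, the constants $C_{\alpha_1}=C_{\alpha_2}$ cancel and the ratio is the fixed constant $\sigma_2^2/\sigma_1^2\neq 1$, so every term in the series equals $t(n)\bigl|\sigma_2^2/\sigma_1^2-1\bigr|^2$ and divergence is immediate. If $\alpha_1<\alpha_2$, then $\alpha_1^{-2}-\alpha_2^{-2}>0$ and the exponential factor grows without bound since $\lambda_n\to\infty$ (Weyl), so $\rho_2(n)/\rho_1(n)\to\infty$ and the summand tends to $\infty$. If $\alpha_1>\alpha_2$, then the exponential decays to $0$, giving $\rho_2(n)/\rho_1(n)\to 0$ and hence $\bigl|(\rho_2(n)-\rho_1(n))/\rho_1(n)\bigr|^2\to 1$; again the series diverges because $\sum_n t(n)=\infty$.

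The only point that needs a sentence of justification is the behavior of the normalization constants $C_{\alpha_i}=\tfrac{1}{V_\MM}\sum_n e^{-\lambda_n/(2\alpha_i^2)}$, which are finite positive real numbers by Weyl's law (the eigenvalues grow polynomially and the exponential suppresses them). Their ratio is therefore a fixed positive constant and does not affect the asymptotics of $\rho_2(n)/\rho_1(n)$, which is governed by the exponential term in $\lambda_n$.

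The main (minor) obstacle is simply bookkeeping: making sure the dichotomy ``ratio $\to 0$ vs.\ ratio $\to\infty$ vs.\ ratio constant $\neq 1$'' produces a uniform lower bound on $\bigl|(\rho_2-\rho_1)/\rho_1\bigr|^2$ large enough to exploit $\sum_n t(n)=\infty$. There are no analytic surprises here because, unlike the Mat\'ern case where one had to distinguish $d\le 3$ and $d\ge 4$, the exponential decay of the squared-exponential spectrum is so fast relative to any polynomial growth of $t(n)$ and $\lambda_n$ that no dimension-dependent cutoff occurs. This is precisely why consistent estimation of both $\sigma^2$ and $\alpha$ is possible on $\MM$ in every dimension, in contrast with the open Euclidean problem.
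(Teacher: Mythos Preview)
Your proposal is correct and follows essentially the same route as the paper: compute the ratio $\rho_2(n)/\rho_1(n)$, observe that whenever $(\sigma_1^2,\alpha_1)\neq(\sigma_2^2,\alpha_2)$ the term $|\rho_2(n)/\rho_1(n)-1|$ is eventually bounded away from $0$ (constant, $\to\infty$, or $\to 1$), and conclude divergence of the series test. The only cosmetic difference is that the paper collapses your $\alpha_1<\alpha_2$ and $\alpha_1>\alpha_2$ cases into one via a ``without loss of generality'' (using the symmetry of equivalence), whereas you treat them separately.
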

\begin{proof}
	See Appendix \ref{app:thm:rbf}.
\end{proof}
Theorem~\ref{thm:rbf} shows that it is possible to have consistent estimators for both $\sigma^2$ and $\alpha$. So far we have developed formal results on the identifiability of parameters in the covariogram on a compact Riemannian manifold. Inference for identifiable parameters will proceed in customary fashion so we turn our attention to non-identifiable settings, i.e., the Mat\'ern covariogram with known $\nu$ on manifolds with dimension $d\leq 3$.

\subsection{Consistency of maximum likelihood estimation}\label{subsec: consistency_mle}
Since $\MM$ is compact, there is no increasing-domain asymptotic framework and $\cup_{n=1}^\infty\{x_n\}$ is always bounded. In the remaining sections, we assume that $\cup_{n=1}^\infty\{x_n\}$ is infinite, which is the standard assumption also known as the increasing sequence assumption \citep[also see][]{stein2012interpolation,zhang2004inconsistent,kaufman2013role}. Let $\{\sigma_0,\alpha_0\}$ be the data generating parameter (oracle) and let $\widehat{\sigma}_{1,n}^2$ be the maximum likelihood estimate of $\sigma^2$ obtained by maximising $L_n(\sigma^2,\alpha_1)$ with a misspecified $\alpha_1$. The following theorem is analogous to Theorem~3 in \cite{zhang2004inconsistent} for compact Riemannian manifolds.

\begin{theorem}\label{thm:consist}
	Under the setting of Theorem~\ref{thm:mfd}, assuming $\cup_{n=1}^\infty\{x_n\}$ is infinite, we obtain
	$$\frac{\widehat{\sigma}_{1,n}^2}{C_{\nu,\alpha_1}}\xrightarrow{n\to\infty} \frac{\sigma_0^2}{C_{\nu,\alpha_0}},~P_0~ a.s. $$
\end{theorem}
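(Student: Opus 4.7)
The plan is to reduce Theorem~\ref{thm:consist} to a straightforward law-of-large-numbers statement under a carefully chosen equivalent Gaussian measure, and then transfer the a.s. conclusion back to $P_0$ using the equivalence of measures supplied by Theorem~\ref{thm:mfd}(A). This mirrors Zhang's argument for $\RR^d$, but with the microergodic parameter $\sigma^2\alpha^{2\nu}$ replaced by $\sigma^2/C_{\nu,\alpha}$.

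\textbf{Step 1 (pick an equivalent measure).} Define $\sigma_1^2 \coloneqq C_{\nu,\alpha_1}\,\sigma_0^2/C_{\nu,\alpha_0}$, and let $P_1$ be the Gaussian measure with Mat\'ern covariogram of parameters $(\sigma_1^2,\alpha_1)$. By construction $\sigma_1^2/C_{\nu,\alpha_1}=\sigma_0^2/C_{\nu,\alpha_0}$, so Theorem~\ref{thm:mfd}(A) gives $P_0\equiv P_1$. Consequently any statement that holds $P_1$-a.s.\ also holds $P_0$-a.s., and it suffices to prove $\widehat{\sigma}_{1,n}^2 \to \sigma_1^2$ $P_1$-a.s.

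\textbf{Step 2 (identify the law of the quadratic form under $P_1$).} Under $P_1$, $Z_n\sim\mathcal{N}(0,\sigma_1^2\Gamma_n(\alpha_1))$. Using a Cholesky factor $\Gamma_n(\alpha_1)=L_nL_n^{\T}$ and writing $Z_n=\sigma_1 L_n\eta_n$ with $\eta_n\sim\mathcal{N}(0,I_n)$, one finds
\begin{equation*}
n\,\widehat{\sigma}_{1,n}^2 \;=\; Z_n^{\T}\Gamma_n(\alpha_1)^{-1}Z_n \;=\; \sigma_1^2\,\|\eta_n\|^2,
\end{equation*}
so $n\,\widehat{\sigma}_{1,n}^2/\sigma_1^2\sim\chi^2_n$. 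In particular $\EE_{P_1}[\widehat{\sigma}_{1,n}^2]=\sigma_1^2$ and $\mathrm{Var}_{P_1}(\widehat{\sigma}_{1,n}^2)=2\sigma_1^4/n$.

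\textbf{Step 3 (a.s.\ convergence under $P_1$).} Classical chi-square concentration (or Hanson--Wright applied to Gaussian quadratic forms) yields, for each $\eps\in(0,1)$, a constant $c_\eps>0$ such that $P_1(|\widehat{\sigma}_{1,n}^2/\sigma_1^2-1|>\eps)\le 2e^{-c_\eps n}$. Since $\sum_n e^{-c_\eps n}<\infty$, the Borel--Cantelli lemma gives $\widehat{\sigma}_{1,n}^2\to \sigma_1^2$ $P_1$-a.s. Combined with Step 1, we obtain $\widehat{\sigma}_{1,n}^2\to\sigma_1^2= C_{\nu,\alpha_1}\sigma_0^2/C_{\nu,\alpha_0}$ $P_0$-a.s., and dividing by $C_{\nu,\alpha_1}$ finishes the proof.

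The argument is essentially routine once Theorem~\ref{thm:mfd}(A) is in hand; the only place where something could go wrong is the exponential tail bound in Step 3, and even a $O(1/n)$ Chebyshev bound suffices along the subsequence $n_k=k^2$, after which a sandwich using the monotone behaviour of partial sums of i.i.d.\ $\chi^2_1$ (available via the Cholesky representation but applied sequentially) upgrades to full a.s.\ convergence. Thus I do not foresee any serious obstacle; the genuinely novel content is packaged inside Theorem~\ref{thm:mfd}(A), which is what allowed us to replace $P_0$ by the more tractable $P_1$ in the first place.
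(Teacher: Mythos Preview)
Your proposal is correct and follows essentially the same route as the paper: define $\sigma_1^2=C_{\nu,\alpha_1}\sigma_0^2/C_{\nu,\alpha_0}$, invoke Theorem~\ref{thm:mfd}(A) to get $P_0\equiv P_1$, observe that under $P_1$ one has $n\widehat{\sigma}_{1,n}^2/\sigma_1^2\sim\chi_n^2$, and conclude $\widehat{\sigma}_{1,n}^2\to\sigma_1^2$ $P_1$-a.s.\ (hence $P_0$-a.s.). The paper's write-up is terser---it simply asserts $\chi_n^2/n\to 1$ a.s.---whereas you spell out the concentration/Borel--Cantelli justification, but the substance is identical.
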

\begin{proof}
	See Appendix \ref{app:thm:consist}.
\end{proof}
In Euclidean space, $\widehat{\sigma}_{1,n}^2/C_{\nu,\alpha_1}$ is asymptotically Gaussian. We conjecture that this asymptotic normality still holds on Riemannian manifolds. However, this result relies on specific constructions in Euclidean space \citep{daqing2010fixed}, which become invalid for manifolds. A formal proof is beyond the scope of the current manuscript and we intend to pursue this development in future investigations. In Section~\ref{sec:sphere} we present a numerical simulation experiment to demonstrate the asymptotic (normal) behaviour of this parameter on spheres.

\subsection{Prediction}\label{subsec: prediction}
Given a new location $x_0\in \MM\backslash\{x_i\}_{i=1}^n$, the best linear unbiased predictor for $Z_0=Z(x_0)$ \changedreviewerone{under a covariance function $k_\rho$ characterised by its spectral density $\rho$} is given by
$$\widehat{Z}_n(\rho)=\gamma_n(\rho)^\T\Gamma_n(\rho)^{-1}Z_n,$$
\changedreviewerone{where $\gamma_n(\rho) = \frac{1}{\sigma^2}k_\rho(x_0,x_i)$ and $\{\Gamma_n(\rho)\}_{ij}=\frac{1}{\sigma^2}k_\rho(x_i,x_j)$.}

\cite{kirchner2020necessary} and \cite{bolin2021equivalence} generalise the results of asymptotic \changedreviewertwo{optimality of the BLUP based on a misspecified scale parameter in Euclidean spaces \citep{stein1993simple} to metric spaces. That is, the prediction error of the BLUP under a misspecified scale parameter is asymptotically the same as the error of the BLUP under the true parameter.} 
If the domain is a compact Riemannian manifold and the covariograms are Mat\'ern, then two covariance operators share the same eigenbasis; this is the setting described in Section 5.1 of \cite{kirchner2020necessary} as a special case of Theorem~3.1 therein.  We rephrase it in the following lemma with some modifications to fit the Mat\'ern covariograms on a compact Riemannian manifold with a different and simpler proof.
\begin{lemma}\label{lem:prediction}
	Let $\rho_0,\rho_1$ be the spectral densities of two Gaussian measures on $\MM$ with Mat\'ern covariograms. Given $x_0\in \MM\backslash\{x_i\}_{i=1}^n$, let $\widehat{Z}_n(\rho_i)$ be the best linear unbiased predictor of $Z_0\coloneqq Z(x_0)$ based on observations $\{Z(x_1), \cdots, Z(x_n)\}$ with $\{x_i\}_{i=1}^\infty$ being infinite \changedreviewertwo{and having $x_0$ as an accumulation point}, where $\rho_i$ is the spectral density of $Z(\cdot)$. If there exists a real number $c$ such that $\displaystyle \lim_{m\to \infty} \frac{\rho_1(m)}{\rho_0(m)}=c$, then: \begin{enumerate}[(i)]
		\item $\displaystyle \frac{\EE_{\rho_0}(\widehat{Z}_n(\rho_1)-Z_0)^2}{\EE_{\rho_0}(\widehat{Z}_n(\rho_0)-Z_0)^2}\xrightarrow{n\to\infty }1,$
		\item $\displaystyle \frac{\EE_{\rho_1}(\widehat{Z}_n(\rho_1)-Z_0)^2}{\EE_{\rho_0}(\widehat{Z}_n(\rho_1)-Z_0)^2}\xrightarrow{n\to\infty }c$. 
	\end{enumerate}
\end{lemma}
\begin{proof}
	See Appendix \ref{app:lem:prediction}.
\end{proof}
Focusing on the parameters in a Mat\'ern covariogram, let $\widehat{\sigma}_{1,n}^2$ be the maximum likelihood estimate of $L_n(\sigma^2,\alpha_1)$ and $\rho_i$ be the spectral density of the Mat\'ern covariogram with decay parameter $\alpha_i$.
\begin{theorem}\label{thm:prediction}
	Under the same conditions as in Theorem \ref{thm:mfd} and Lemma \ref{lem:prediction}, let $\sigma_1^2 = \sigma_0^2C_{\nu,\alpha_1}/C_{\nu,\alpha_0}$, then
	$$\frac{\EE_{\sigma_0^2,\alpha_0}(\widehat{Z}_n(\alpha_1)-Z_0)^2}{\EE_{\sigma_0^2,\alpha_0}(\widehat{Z}_n(\alpha_0)-Z_0)^2}\xrightarrow{n\to\infty} 1,~~
	\frac{\EE_{\widehat{\sigma}_{1,n}^2,\alpha_1}(\widehat{Z}_n(\alpha_1)-Z_0)^2}{\EE_{\sigma_0^2,\alpha_0}(\widehat{Z}_n(\alpha_1)-Z_0)^2}\xrightarrow[P_0 ~a.s.]{n\to\infty} 1.$$
\end{theorem}
\begin{proof}
	See Appendix \ref{app:thm:prediction}.
\end{proof}
Note that Lemma \ref{lem:prediction} and Theorem~\ref{thm:prediction} offer the manifold versions of Theorems~3~and~4 in \cite{kaufman2013role}. 

\section{Mat\'ern on spheres}\label{sec:sphere}

We now consider Gaussian processes with the Mat\'ern covariogram on the $d$-dimensional sphere $S^d$, including two popular manifolds in spatial statistics: the circle $S^1$ and sphere $S^2$. We show that all theorems in the previous sections hold for $S^d$ with $d=1,2,3$. As earlier, we assume that $P_i, ~i = 1,2$, are two Gaussian measures on $S^d$ with Mat\'ern covariogram parameters $\{ \sigma_i^2,\alpha_i, \nu\}$. 

\begin{theorem}\label{thm:spheres}
	For spheres with dimension $d=1,2,3$, the following results are true:
	\begin{enumerate}
		\item $P_1\equiv P_2$ if and only if $\sigma_1^2/C_{\nu,\alpha_1}=\sigma_2^2/C_{\nu,\alpha_2}$, so neither $\sigma^2$ nor $\alpha$ can be consistently estimated. 
		\item Let the data generating parameters be  $\{\sigma_0,\alpha_0\}$ and $\widehat{\sigma}_{1,n}^2$ be the maximum likelihood estimation of $L_n(\sigma^2,\alpha_1)$ with misspecified $\alpha_1$ based on increasing sequence $\{x_i\}_{i=1}^n$. Then, $$\frac{\widehat{\sigma}_{1,n}^2}{C_{\nu,\alpha_1}}\xrightarrow{n\to\infty} \frac{\sigma_0^2}{C_{\nu,\alpha_0}},~P_0~a.s.$$
		\item Given $x_0\in \MM\backslash\{x_i\}_{i=1}^n$, let $\widehat{Z}_n$ be the best linear unbiased predictor of $Z_0\coloneqq Z(x_0)$ based on observations $\{Z(x_1), \cdots, Z(x_n)\}$ with $\{x_i\}_{i=1}^\infty$ being infinite, then
		$$\frac{\EE_{\widehat{\sigma}_{1,n}^2,\alpha_1}(\widehat{Z}_n(\alpha_1)-Z_0)^2}{\EE_{\sigma_0^2,\alpha_0}(\widehat{Z}_n(\alpha_1)-Z_0)^2}\xrightarrow{n\to\infty} 1,~P_0~a.s.$$
	\end{enumerate}
\end{theorem}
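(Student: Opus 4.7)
The plan is to obtain each of the three statements as a direct specialization of the general theorems of Section~\ref{sec:MaternMfd}. Since $S^d$ is a compact Riemannian manifold, the general framework applies as soon as two hypotheses are verified: (i) the two correlation operators $B_1$ and $B_2$ associated to the Mat\'ern covariograms commute, and (ii) the distinct eigenvalues $\lambda_n$ of $-\Delta_g$ and their multiplicities $t(n)$ satisfy the polynomial growth conditions required in Theorem~\ref{thm:mfd}.

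For the spectral geometry of $S^d$, I would invoke the classical fact that the distinct eigenvalues of $-\Delta_g$ are $\lambda_n=n(n+d-1)$ with multiplicities
$$t(n)=\binom{n+d}{d}-\binom{n+d-2}{d},$$
which specialize to $t(n)=2$ for $d=1$, $t(n)=2n+1$ for $d=2$, and $t(n)=(n+1)^2$ for $d=3$. In particular $\lambda_n=\Theta(n^2)$ and $t(n)=\Theta(n^{d-1})$ for $d=1,2,3$, so Theorem~\ref{thm:mfd} applies with $r=2$ and $r'=d-1$. As a sanity check, $\sum_n t(n)/\lambda_n^2 \sim \sum_n n^{d-5}<\infty$ for $d\leq 3$, consistent with the non-identifiable regime described by case~(A).

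For commutation of the correlation operators, the key observation is that the Mat\'ern covariogram on $S^d$, defined via the SPDE~\eqref{eqn:Matern}, is built from $\Delta_g$ and the Riemannian volume, both of which are invariant under the orthogonal group $O(d+1)$ acting on $S^d$. Hence each $R_i(x,y)$ depends only on the geodesic distance between $x$ and $y$, i.e., is a zonal kernel, and $B_i$ commutes with every rotation. By Schur's lemma, $B_i$ then acts as scalar multiplication by $\rho_i(n)$ on each irreducible subspace of spherical harmonics of degree $n$. Since $B_1$ and $B_2$ are simultaneously diagonalized by the spherical harmonic basis of $L^2(S^d)$, they commute.

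With both hypotheses verified, part~(1) is exactly Theorem~\ref{thm:mfd}(A), part~(2) is exactly Theorem~\ref{thm:consist}, and part~(3) is the second assertion of Corollary~\ref{cly:prediction}. For part~(3) I would additionally check that Theorem~\ref{thm:prediction} applies: for the Mat\'ern densities with the MLE plugged in, one has $\rho_1(m)/\rho_0(m)\to (\widehat{\sigma}_{1,n}^2/C_{\nu,\alpha_1})/(\sigma_0^2/C_{\nu,\alpha_0})$ as $m\to\infty$, and by part~(2) this converges $P_0$-a.s.\ to $1$. The main subtlety I foresee is this last step: because $\widehat{\sigma}_{1,n}^2$ is random, one must combine the a.s.\ convergence from part~(2) with the deterministic statement of Theorem~\ref{thm:prediction} pathwise. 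This is precisely the pathwise argument already encoded in Corollary~\ref{cly:prediction}, so the heavy lifting is absorbed by that result and no new technical difficulty arises beyond citing it.
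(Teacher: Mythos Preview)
Your proposal is correct and follows essentially the same route as the paper: verify commutativity of the $B_i$ (the paper does this by citing that they are diagonal in the spherical-harmonic basis, which is equivalent to your Schur-lemma argument), compute $\lambda_n=n(n+d-1)=\Theta(n^2)$ and $t(n)=\Theta(n^{d-1})$, and then invoke Theorems~\ref{thm:mfd}, \ref{thm:consist}, and Corollary~\ref{cly:prediction}. Your identification of part~(3) with Corollary~\ref{cly:prediction} is in fact slightly more precise than the paper's own citation of Theorem~\ref{thm:prediction}.
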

\begin{proof}
	See Appendix \ref{app:thm:spheres}.
\end{proof}
Next, we consider two concrete examples: the circle $S^1$ and the sphere $S^2$.

\subsection{Mat\'ern covariogram on circle}
First, we recall the simplified form of the Mat\'ern covariogram on $S^1$ \citep{NEURIPS2020_92bf5e62}:
\begin{lemma}\label{lem:circle}
	When $\MM=S^1\subset \RR^2$ and $\nu=1/2+s$, $s\in\NN$, the Mat\'erm covariogram is given by
	\begin{equation}\label{eqn:MaternCovS1}
		k(x,y) = \frac{\sigma^2}{C'_{\nu,\alpha}}\sum_{k=0}^s a_{s,k}(\alpha(|x-y|-1/2))^k\mathrm{hyp}^k(\alpha(|x-y|-1/2)),~~x,y\in S^1,
	\end{equation}
	where $C'_{\nu,\alpha}$ is chosen so that $k(x,x) = \sigma^2$, $\mathrm{hyp}^k$ is $\cosh$ when $k$ is even and $\sinh$ when $k$ is odd, $a_{s,k}$ are constants depending on $\nu$ and $\alpha$; see \cite{NEURIPS2020_92bf5e62} for details.
\end{lemma}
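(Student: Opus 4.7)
The plan is to start from the general spectral representation in Lemma \ref{lem:mfd} and evaluate it explicitly on $S^1$ using the standard Fourier basis, then to recognise the resulting Fourier sum as a classical Green's function type identity that admits a closed form in terms of hyperbolic functions.

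First I would write down the spectrum of $-\Delta_g$ on the unit circle: the eigenvalues are $\lambda_n = n^2$ with multiplicity $2$ for $n\geq 1$ (multiplicity $1$ for $n=0$) and eigenfunctions given by the normalised trigonometric basis $\{1/\sqrt{2\pi},\ \cos(n\theta)/\sqrt{\pi},\ \sin(n\theta)/\sqrt{\pi}\}$. Substituting $d=1$ and $\nu=1/2+s$ into the series from Lemma \ref{lem:mfd} gives exponent $\nu+d/2 = s+1\in\NN$, and the cosine addition formula reduces the double series to a single Fourier series in the angular distance $\phi$ between $x$ and $y$,
\begin{equation*}
k(x,y) \;=\; \frac{\sigma^2}{C_{\nu,\alpha}}\,\frac{1}{2\pi}\sum_{n=-\infty}^{\infty} \frac{e^{in\phi}}{(2\nu\alpha^2 + n^2)^{s+1}}.
\end{equation*}

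Next I would evaluate this sum. The case $s=0$ is the classical identity
\begin{equation*}
\sum_{n=-\infty}^{\infty}\frac{e^{in\phi}}{n^2+c^2} \;=\; \frac{\pi}{c}\,\frac{\cosh\!\bigl(c(\pi-|\phi|)\bigr)}{\sinh(c\pi)},\qquad \phi\in[-\pi,\pi],
\end{equation*}
which follows either by residue calculus applied to $\pi\cot(\pi z)/(z^2+c^2)$ or by solving the ODE $(c^2-\partial_\phi^2)G=\delta_0$ on $S^1$ with $c^2 = 2\nu\alpha^2$. For general $s\geq 1$ I would iteratively apply $-\partial_{c^2}$ to this identity; each differentiation produces an extra factor $1/(n^2+c^2)$ on the left, and on the right it produces a linear combination of terms of the form (polynomial in $(\pi-|\phi|)$) times $\cosh$ or $\sinh$ evaluated at $c(\pi-|\phi|)$, with parity alternating appropriately. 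After $s$ steps one obtains, up to constants depending only on $\nu$ and $\alpha$,
\begin{equation*}
\sum_{n=-\infty}^{\infty}\frac{e^{in\phi}}{(n^2+c^2)^{s+1}} \;=\; \sum_{k=0}^{s} b_{s,k}\bigl(c(\pi-|\phi|)\bigr)^{k}\,\mathrm{hyp}^{k}\!\bigl(c(\pi-|\phi|)\bigr),
\end{equation*}
with $\mathrm{hyp}^k = \cosh$ for $k$ odd and $\sinh$ for $k$ even (the parity inverts because each differentiation swaps the two). Rescaling $c(\pi-|\phi|)$ to $\alpha(|x-y|-1/2)$ in the conventions of \cite{borovitskiy2020matern} absorbs the remaining factors into $a_{s,k}$ and into the normalising constant $C_{\nu,\alpha}$ fixed by $k(x,x)=\sigma^2$, yielding \eqref{eqn:MaternCovS1}.

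The main obstacle is purely bookkeeping: tracking the constants $a_{s,k}$ through repeated differentiation with respect to $c^2$, keeping the combinatorics of products of polynomial and hyperbolic factors straight, and then reconciling the resulting normalisation with the definition of $C_{\nu,\alpha}$ given in Lemma \ref{lem:mfd}. Since the final identity is asserted only up to explicit constants and the statement is borrowed from \cite{borovitskiy2020matern}, I would carry out the recursion for $s=0,1$ in detail to fix the pattern, establish the general case by induction on $s$, and then verify $k(x,x)=\sigma^2$ at $\phi=0$ to identify $C_{\nu,\alpha}$.
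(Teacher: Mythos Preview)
The paper does not prove this lemma at all: it is quoted verbatim from \cite{borovitskiy2020matern} and no argument for it appears anywhere in the text or the appendices. So there is nothing in the paper to compare your proposal against.

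That said, your proposed route is the standard one and is essentially how the result is obtained in \cite{borovitskiy2020matern}: specialise the spectral series of Lemma~\ref{lem:mfd} to $S^1$, collapse it to a one-dimensional Fourier series in the angular separation, identify the $s=0$ sum via the classical identity $\sum_{n\in\ZZ} e^{in\phi}/(n^2+c^2) = (\pi/c)\cosh(c(\pi-|\phi|))/\sinh(c\pi)$, and then repeatedly differentiate in $c^2$ to raise the power in the denominator. Two small normalisation points are worth flagging. First, the paper parametrises $S^1$ with total length $1$ (antipodal distance $1/2$), so the Laplacian eigenvalues are $(2\pi n)^2$ rather than $n^2$; this is why the spectral density \eqref{eqn:MaternSpeS1} carries the factor $4\pi^2 n^2$ and why the argument in \eqref{eqn:MaternCovS1} is $\alpha(|x-y|-1/2)$ rather than $c(\pi-|\phi|)$. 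Second, the parity convention for $\mathrm{hyp}^k$ stated in the lemma does not match the $\nu=1/2$ formula displayed immediately below it in the paper (which has $\cosh$ at $k=0$, consistent with your base identity); this is almost certainly a typo in the lemma statement rather than an error in your reasoning, and your induction correctly produces $\cosh$ at even $k$ and $\sinh$ at odd $k$.
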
 

Note that $x-y\coloneqq \theta_x-\theta_y~\mod{1}$ for $x=e^{2\pi i\theta_x}$ and $y=e^{2\pi i\theta_y}$. Therefore, the Mat\'ern covariogram is ``stationary'' with respect to this group addition instead of the standard addition in Euclidean space. The corresponding spectral density is given by 
\begin{equation}\label{eqn:MaternSpeS1}
	\rho(n)=\frac{2\sigma^2\alpha\sinh(\alpha/2)}{C'_{\nu,\alpha}(2\pi)^{1-2\nu}}\left(\alpha^2+4\pi^2n^2\right)^{-\nu-1/2},~~n\in\ZZ.
\end{equation}

In particular, when $\nu=1/2$, the covariogram and spectral densities admit simple forms:
$$k(x,y)=\frac{\sigma^2}{\cosh(\alpha/2)}\cosh\left(\alpha(|x-y|-1/2)\right),$$
$$\rho(n)=2\sigma^2\alpha\tanh(\alpha/2)(\alpha^2+4\pi^2n^2)^{-1}.$$
Figure~1(a) depicts a covariogram with $\nu=1/2$, $\alpha=2$, and $\sigma^2=1$. Note that $|x-y|=1/2$ means that $x$ and $y$ are antipodal points so the correlation attains a minimum. Figure~1(b) shows a set of simulated $Z$'s with different values of $\alpha$. It is clear that the smaller values of $\alpha$ generate smoother random fields as the correlation grows larger. 

\begin{figure}
	\centering
	\subfloat[]{\includegraphics[width = 0.37\linewidth]{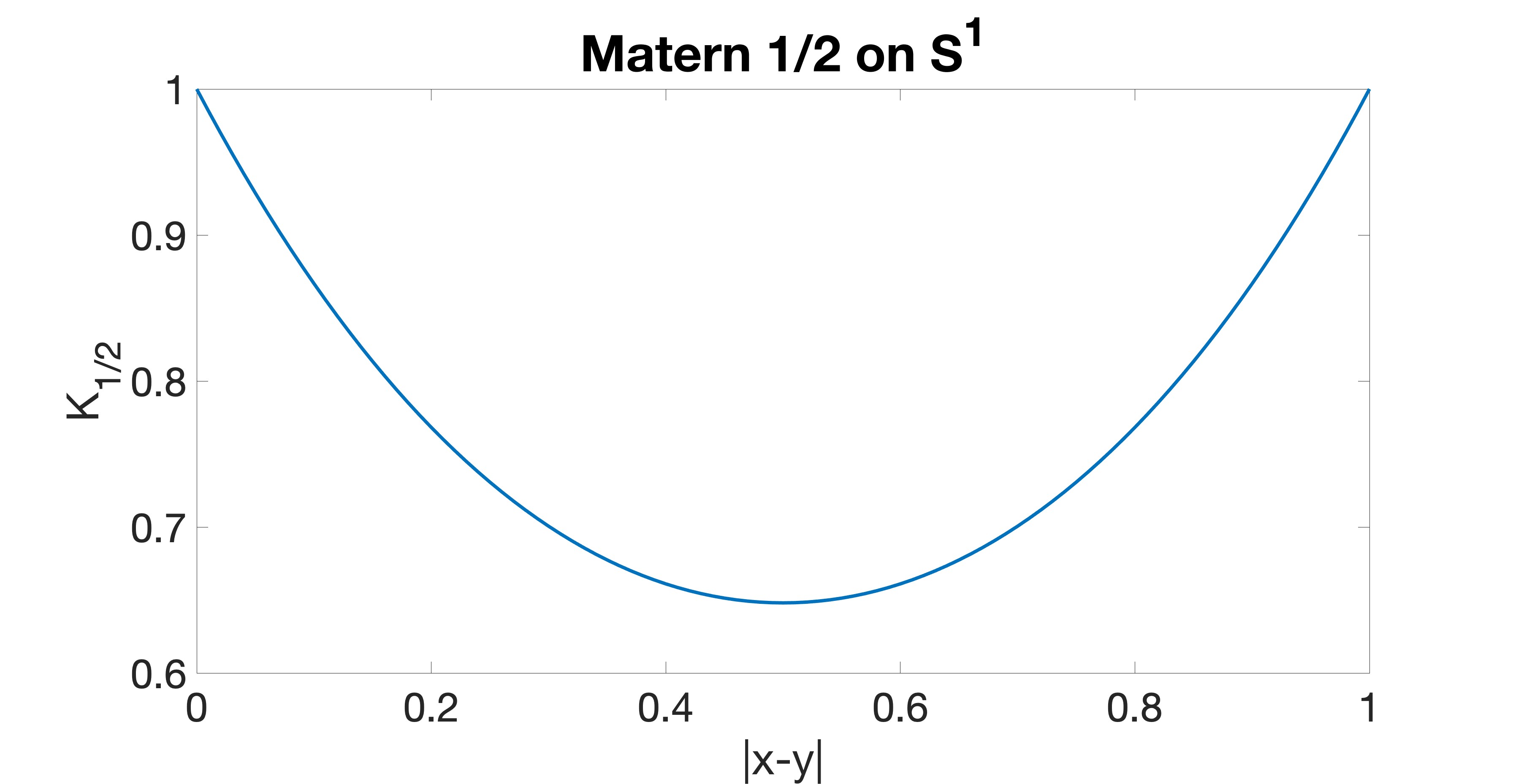}}	\label{fig:Mat1/2_S1}
	\subfloat[]{\includegraphics[width = 0.62\linewidth]{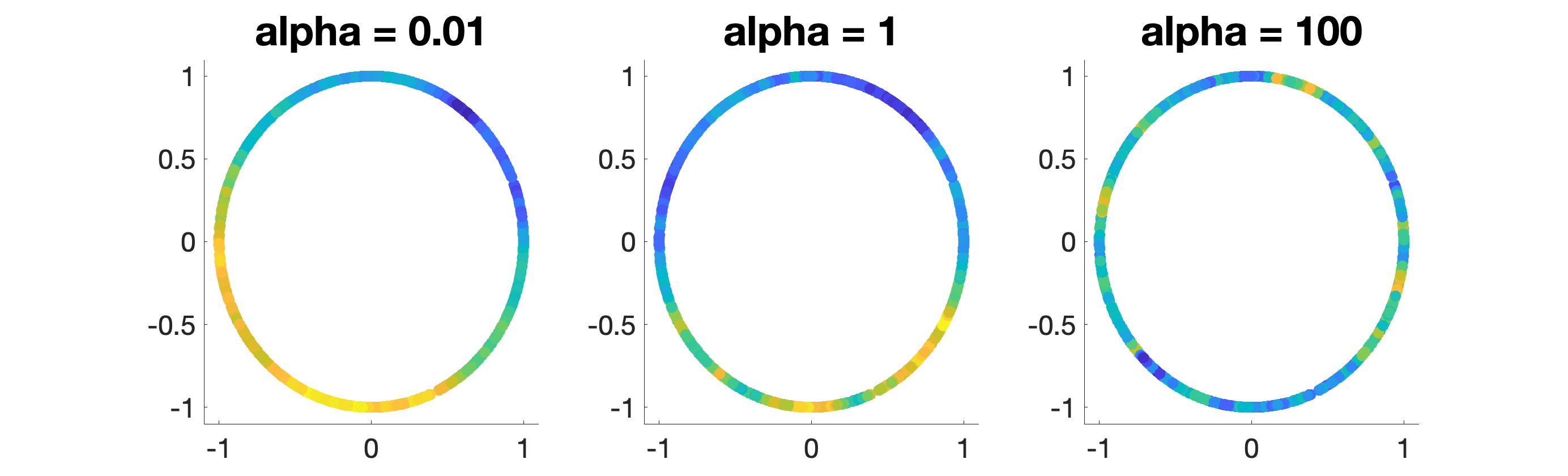}}	\label{fig:S1_simulation}
	\caption{(a) Covariogram of Mat\'ern 1/2 on $S^1$; (b): Sample fields with $\sigma^2=0.1$, $\nu=1/2$, $\alpha\in\{0.01,1,100\}$.} 
\end{figure}

\begin{corollary}\label{cly:circle1/2}
	Let $\nu=1/2$, then $P_{1}\equiv P_{2}$ if and only if $\sigma_1^2\alpha_1 \tanh(\alpha_1/2)=\sigma_2^2\alpha_2 \tanh(\alpha_2/2)$, so neither $\sigma^2$ nor $\alpha$ can be consistently estimated. 
\end{corollary}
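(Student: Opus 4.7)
The entire heavy lifting is done by Theorem~\ref{thm:spheres}(1), which, applied to the sphere $S^1$ (so $d=1\leq 3$), already gives the equivalence criterion $P_1\equiv P_2\iff \sigma_1^2/C_{\nu,\alpha_1}=\sigma_2^2/C_{\nu,\alpha_2}$. All that remains is to evaluate $C_{1/2,\alpha}$ in closed form on the circle and re-express this condition in terms of elementary hyperbolic functions.

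To compute $C_{1/2,\alpha}$, I would match the two available formulas for the spectral density: the generic one from Lemma~\ref{lem:mfd}, $\rho(n)=\tfrac{\sigma^2}{C_{\nu,\alpha}}(2\nu\alpha^2+\lambda_n)^{-\nu-d/2}$, and the circle-specific formula~\eqref{eqn:MaternSpeS1}. On $S^1$ the eigenvalues of $-\Delta_g$ are $\lambda_n=4\pi^2 n^2$, and substituting $\nu=1/2$, $d=1$ in both expressions gives
\begin{equation*}
\rho(n)=\frac{\sigma^2}{C_{1/2,\alpha}}(\alpha^2+4\pi^2 n^2)^{-1}=2\sigma^2\alpha\tanh(\alpha/2)\,(\alpha^2+4\pi^2 n^2)^{-1},
\end{equation*}
where the second equality is the simplification displayed in the text preceding the corollary. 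Matching the leading constants yields $1/C_{1/2,\alpha}=2\alpha\tanh(\alpha/2)$ (up to an absolute factor independent of $\alpha$), and plugging this into the microergodic equality of Theorem~\ref{thm:spheres}(1) gives exactly $\sigma_1^2\alpha_1\tanh(\alpha_1/2)=\sigma_2^2\alpha_2\tanh(\alpha_2/2)$. The non-consistency claim then follows for free: given $\alpha_1\neq \alpha_2$, one may choose $\sigma_1^2,\sigma_2^2$ to preserve the microergodic parameter, producing mutually absolutely continuous Gaussian laws that cannot be distinguished on the basis of a single realization.

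As a sanity check, I would also verify the condition directly via the series test (Theorem~\ref{thm:test}): the correlation operators $B_i$ commute since they are simultaneously diagonal in the trigonometric eigenbasis of $-\Delta_g$; the multiplicities on $S^1$ satisfy $t(n)=O(1)$; and the ratio $\rho_2(n)/\rho_1(n)$ tends to $[\sigma_2^2\alpha_2\tanh(\alpha_2/2)]/[\sigma_1^2\alpha_1\tanh(\alpha_1/2)]$, with $|\rho_2(n)/\rho_1(n)-1|^2$ being $O(n^{-4})$ exactly when this limit equals $1$, recovering the same condition.

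The only real subtlety is bookkeeping the two competing normalizations of $C_{\nu,\alpha}$—the one in Lemma~\ref{lem:mfd} fixed through the average variance over $\MM$, and the one in Lemma~\ref{lem:circle} fixed by the pointwise condition $k(x,x)=\sigma^2$. Since both Theorem~\ref{thm:spheres}(1) and the corollary involve only ratios of the form $\sigma^2/C$, the choice of normalization cancels and the argument goes through unambiguously.
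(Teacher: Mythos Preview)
Your proposal is correct and is exactly the argument the paper intends: Corollary~\ref{cly:circle1/2} carries no separate proof in the paper because it is meant to follow immediately from Theorem~\ref{thm:spheres}(1) together with the explicit $\nu=1/2$ spectral density $\rho(n)=2\sigma^2\alpha\tanh(\alpha/2)(\alpha^2+4\pi^2 n^2)^{-1}$ displayed just before the corollary, which identifies $\sigma^2/C_{1/2,\alpha}$ with $2\sigma^2\alpha\tanh(\alpha/2)$. Your sanity check via the series test and your remark on the normalization conventions are correct but not needed for the argument.
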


For a general $\nu=1/2+s,~s\in\NN$, the normalising constant is
$$C'_{\nu,\alpha}=\sum_{k=0}^s a_{s,k}(-\alpha/2)^k\mathrm{hyp}^k(-\alpha/2).$$
\changedreviewertwo{We point out that this $C'_{\nu,\alpha}$ is different from the $C'_{\nu,\alpha}$ in Definition \ref{def:mfd} when $\MM = S^1$.} Although we cannot express $C'_{\nu,\alpha}$ as an elementary function, we can still find the microergodic parameter for any $\nu=s+1/2,~s\in\ZZ$:
\begin{corollary}\label{cly:circle}
	Let $\nu=1/2+s,~s\in\ZZ$, then $P_{1}\equiv P_{2}$ if and only if $\sigma_1^2\alpha_1 \sinh(\alpha_1/2)/C'_{\nu,\alpha_1}=\sigma_2^2\alpha_2 \sinh(\alpha_2/2)/C'_{\nu,\alpha_2}$, so neither $\sigma^2$ nor $\alpha$ can be consistently estimated. 
\end{corollary}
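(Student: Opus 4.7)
The plan is to apply the series test, Theorem~\ref{thm:test}, directly to the explicit spectral density in \eqref{eqn:MaternSpeS1}. First I would verify the commutativity hypothesis. Since the Mat\'ern process on $S^1$ is rotationally invariant (both the SPDE \eqref{eqn:Matern} and the white noise $\WW_g$ are), each correlation operator $B_i$ is a convolution operator on the abelian group $S^1=\RR/\ZZ$, diagonalized by the Fourier basis $\{e^{2\pi i n x}\}_{n\in\ZZ}$ which are simultaneously the eigenfunctions of $-\Delta_g$. Two operators sharing an eigenbasis commute, so the hypothesis of Theorem~\ref{thm:test} holds automatically.

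Next, write $K_i \coloneqq \sigma_i^2\alpha_i\sinh(\alpha_i/2)/C_{\nu,\alpha_i}$ (the proposed microergodic quantity up to a universal constant). The prefactor $2/(2\pi)^{1-2\nu}$ in \eqref{eqn:MaternSpeS1} is independent of $(\sigma,\alpha)$, so it cancels in the ratio, yielding
\begin{equation*}
\frac{\rho_2(n)}{\rho_1(n)} \;=\; \frac{K_2}{K_1}\left(\frac{\alpha_1^2 + 4\pi^2 n^2}{\alpha_2^2 + 4\pi^2 n^2}\right)^{\nu+1/2}.
\end{equation*}
The bracketed factor tends to $1$ as $|n|\to\infty$, so $\rho_2(n)/\rho_1(n)\to K_2/K_1$. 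For the series $\sum_{n\in\ZZ}|\rho_2(n)/\rho_1(n)-1|^2$ in Theorem~\ref{thm:test} to be finite it is clearly necessary that this limit equal $1$, i.e.\ $K_1=K_2$; this delivers the ``only if'' direction.

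For the converse, I would assume $K_1=K_2$ and Taylor-expand at large $|n|$:
\begin{equation*}
\frac{\rho_2(n)}{\rho_1(n)}-1 \;=\; \left(1+\frac{\alpha_1^2-\alpha_2^2}{\alpha_2^2+4\pi^2n^2}\right)^{\nu+1/2}-1 \;=\; O(n^{-2}),
\end{equation*}
whence $|\rho_2(n)/\rho_1(n)-1|^2 = O(n^{-4})$. Summed against the bounded multiplicity structure on $S^1$ (eigenvalues of $-\Delta_g$ are $\lambda_n = 4\pi^2 n^2$, each of multiplicity at most $2$, which is already encoded by summing over $n\in\ZZ$), this yields a convergent series. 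Theorem~\ref{thm:test} then produces $P_1\equiv P_2$, and the second clause of the corollary (no consistent estimator of either $\sigma^2$ or $\alpha$) is automatic: any two parameter pairs with the same value of $K$ are statistically indistinguishable by equivalent measures.

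The only mild obstacle is the bookkeeping of the Taylor remainder: one must check that the expansion is uniformly $O(n^{-2})$ despite the half-integer exponent $\nu+1/2$ and the $\alpha_i$-dependence in the denominator. Using $(1+x)^{\nu+1/2}-1 = (\nu+1/2)x + O(x^2)$ with $x=(\alpha_1^2-\alpha_2^2)/(\alpha_2^2+4\pi^2n^2) = O(n^{-2})$ handles this cleanly for all sufficiently large $|n|$, after which the finitely many remaining terms contribute a bounded tail. Everything else is a direct invocation of Theorem~\ref{thm:test}, so no substantial new difficulty arises beyond the identification of the correct microergodic quantity $K$ from the spectral density \eqref{eqn:MaternSpeS1}.
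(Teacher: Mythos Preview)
Your proposal is correct and follows essentially the same route as the paper: verify commutativity of the correlation operators (the paper notes they are diagonal in the spherical-harmonic basis), then apply the series test of Theorem~\ref{thm:test} to the explicit spectral density \eqref{eqn:MaternSpeS1}, reducing the equivalence criterion to $K_1=K_2$ via the $O(n^{-2})$ Taylor expansion. The paper routes the argument through the general Theorem~\ref{thm:mfd}/Theorem~\ref{thm:spheres} (checking $\lambda_n=\Theta(n^2)$ and bounded multiplicity on $S^1$) rather than computing directly from \eqref{eqn:MaternSpeS1}, but the underlying estimate is identical.
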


Figure~2 shows that \changedreviewertwo{$\widehat{\sigma}^2_{1,n}\to \sigma_1^2\coloneqq \frac{\sigma_0^2\alpha_0\sinh(\alpha_0/2)}{C'_{\nu,\alpha_0}}\frac{C'_{\nu,\alpha_1}}{\alpha_1\sinh(\alpha_1/2)}$ as shown by the horizontal line} and the empirical distribution of $\sqrt{n}\left(\frac{\widehat{\sigma}^2_{1,n}}{\sigma_1^2}-1\right)$ is $N(0,2)$, for $\nu=1/2$, $\sigma_0=0.1$, $\alpha_0=2\neq \alpha_1=1$. Panel~(a) supports Theorem~\ref{thm:spheres} empirically. That is, although $(\sigma^2,\alpha,\nu)$ are not consistently estimable, the microergodic parameter \changedreviewertwo{$\frac{\sigma^2\alpha\sinh(\alpha/2)}{C'_{\nu,\alpha}}$} is consistently estimable. Panel~(b) supports our conjecture after Theorem \ref{thm:consist} empirically.

\begin{figure}
	\centering
	\subfloat[]{\includegraphics[width = 0.49\linewidth]{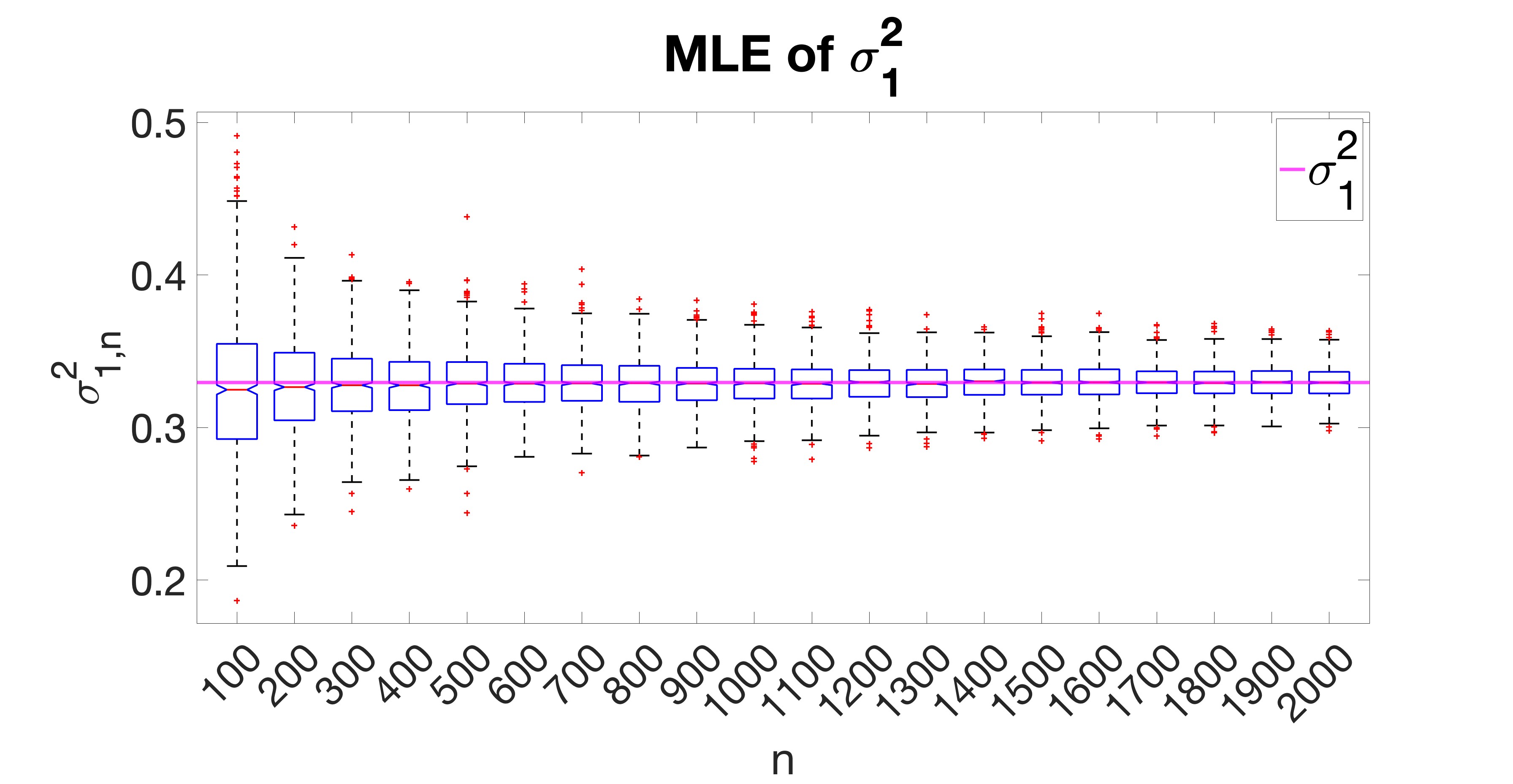}}\label{fig:S1_MLE}
	\subfloat[]{\includegraphics[width = 0.49\linewidth]{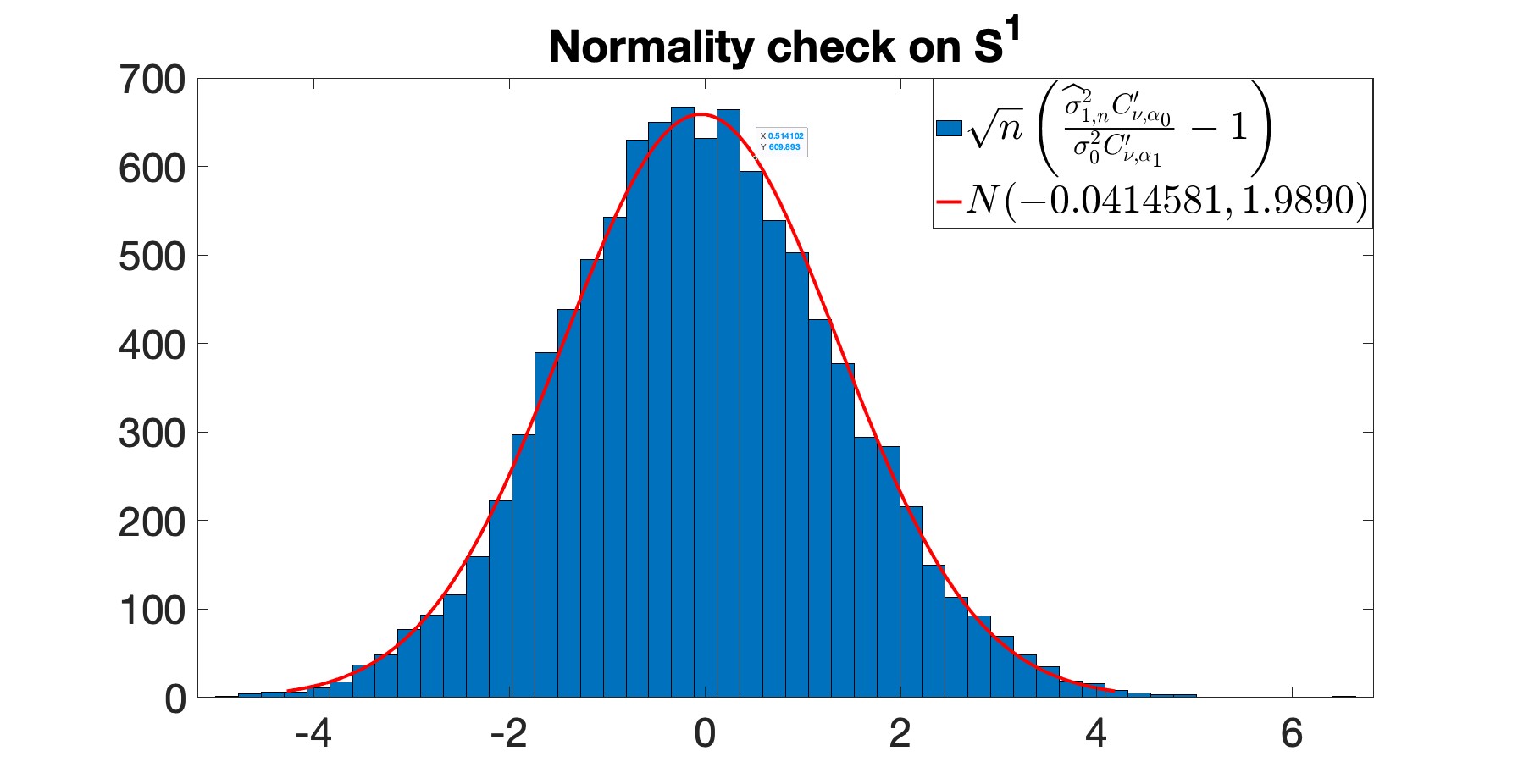}}\label{fig:S1_norm}
	\caption{(a) $\widehat{\sigma}^2_{1,n}$ v.s. $\sigma_1^2$; (b): Distribution of $\sqrt{n}\left(\frac{\widehat{\sigma}_{1,n}^2}{\sigma_1^2}-1\right)$.} 
\end{figure}

\subsection{Mat\'ern covariogram on the sphere}
On a sphere $S^2$, the Mate\'rn covariogram is more complicated \citep{NEURIPS2020_92bf5e62}: 
\begin{lemma}\label{lem:sphere}
	The Mat\'ern covariogram on $\MM=S^2$ with $\nu > 0$ is
	\[
	k(x,y)=\frac{\sigma^2}{C_{\nu,\alpha}}\sum_{l=0}^\infty\left(\alpha^2+l(l+1)\right)^{-\nu-1}c_{l}\mathcal{L}_l(\cos(d_M(x,y)))
	\]
	and its spectral density is given by
	\[
	\rho(l)=\frac{\sigma^2}{C_{\nu,\alpha}} \left(\alpha^2+l(l+1)\right)^{-\nu-1}\;, 
	\]
	where $d_M(\cdot,\cdot)$ is the geodesic distance on $S^2$, $\mathcal{L}_l$ is the Legendre polynomial of degree $l$:
	\[
	\mathcal{L}_l(z)=\sum_{k=0}^{\floor*{l/2}}(-1)^k\frac{l!(l-k-\frac{1}{2})!}{k!(l-2k)!}(2z)^{n-2k}\;,\quad \mbox{and} 
	\]
	\[
	c_{l}=\frac{(2l+1)\Gamma(3/2)}{2\pi^{3/2}},~~C_{\nu,\alpha} = \frac{\Gamma(3/2)}{{8\pi^{5/2}}}\sum_{l=0}^\infty (2l+1)\left(2\nu\alpha^2+l(l+1)\right)^{-\nu-1}
	\]
\end{lemma}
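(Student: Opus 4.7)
The plan is to specialize Lemma~\ref{lem:mfd} from \cite{borovitskiy2020matern} to the case $\MM = S^2$, $d = 2$, by plugging in explicit spectral data for the Laplace--Beltrami operator on the round 2-sphere. First I would recall the classical spectral theory of $-\Delta_g$ on $S^2$: its eigenvalues are $\lambda_n = n(n+1)$ for $n = 0,1,2,\ldots$, each with multiplicity $t(n) = 2n+1$, and the corresponding eigenspaces are spanned by the spherical harmonics $\{Y_{n,m}\}_{m=-n}^n$, which form an $L^2(S^2)$-orthonormal basis with respect to the Riemannian volume $dV_g$ (with $V_{S^2} = 4\pi$).

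Next, I would invoke the spherical harmonic addition theorem,
$$\sum_{m=-n}^n Y_{n,m}(x)\overline{Y_{n,m}(y)} \;=\; \frac{2n+1}{4\pi}\,\mathcal{L}_n(\cos d_M(x,y)),$$
where $\mathcal{L}_n$ is the degree-$n$ Legendre polynomial. Substituting this identity into the eigenfunction expansion from Lemma~\ref{lem:mfd} collapses the inner $m$-sum for each $n$ and yields
$$k(x,y) \;=\; \frac{\sigma^2}{C_{\nu,\alpha}}\sum_{n=0}^\infty \bigl(2\nu\alpha^2 + n(n+1)\bigr)^{-\nu-1}\,c_n\,\mathcal{L}_n(\cos d_M(x,y))$$
with $c_n = (2n+1)/(4\pi)$, which matches the stated $(2n+1)\Gamma(3/2)/(2\pi^{3/2})$ since $\Gamma(3/2) = \sqrt{\pi}/2$. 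The spectral density $\rho(n) = (\sigma^2/C_{\nu,\alpha})(2\nu\alpha^2 + n(n+1))^{-\nu-1}$ is then read off directly from Lemma~\ref{lem:mfd} upon substituting $\lambda_n = n(n+1)$ and $d = 2$.

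For the normalizing constant I would use the fact that $\mathcal{L}_n(1) = 1$, so the addition theorem evaluated at $y = x$ gives $\sum_m |Y_{n,m}(x)|^2 = (2n+1)/(4\pi)$, a constant in $x$. Integrating this over $S^2$ and dividing by $V_{S^2} = 4\pi$, the formula for $C_{\nu,\alpha}$ from Lemma~\ref{lem:mfd} reduces to
$$C_{\nu,\alpha} \;=\; \frac{1}{4\pi}\sum_{n=0}^\infty (2n+1)\bigl(2\nu\alpha^2 + n(n+1)\bigr)^{-\nu-1},$$
which agrees with the claimed expression via the same $\Gamma(3/2)$ identity. The explicit polynomial representation of $\mathcal{L}_n(z)$ follows from the Rodrigues formula applied to $(z^2 - 1)^n$, rewriting the resulting Pochhammer-type coefficient via $(n-k-1/2)! = \Gamma(n-k+1/2)$.

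There is no genuine obstacle here; the proof is essentially a bookkeeping exercise once the eigen-data of $-\Delta_g$ on $S^2$ and the addition theorem are in hand. The only care required is maintaining consistency between the two normalization conventions involved --- the $1/(4\pi)$ factor arising from the Riemannian volume on $S^2$ in Lemma~\ref{lem:mfd}, and the equivalent form $\Gamma(3/2)/(2\pi^{3/2})$ used in the statement.
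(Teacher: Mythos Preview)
Your proposal is correct and is precisely the natural derivation. The paper itself does not give a separate proof of Lemma~\ref{lem:sphere}; it is stated as a direct specialization of the general Lemma~\ref{lem:mfd} (from \cite{borovitskiy2020matern}) to $\MM=S^2$, and your argument---inserting $\lambda_n=n(n+1)$, $t(n)=2n+1$, invoking the spherical-harmonic addition theorem, and checking the normalization via $\Gamma(3/2)/(2\pi^{3/2})=1/(4\pi)$---is exactly how one fills in the details.
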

\changedreviewertwo{
\begin{remark}
The index $l$ in the above covariance function is different from the index $l$ in Definition \ref{def:mfd}. In fact, each Legendre polynomial corresponds to multiple spherical harmonics, so the spectral density does not contain the $c_l$ constants anymore. 
\end{remark}
}
Unlike Lemma~\ref{lem:circle}, where $\nu$ is required to be a half-integer, here $\nu$ can be any positive number. However, the covariogram now involves an infinite series, which needs to be approximated when $x\neq y$. Approximating a function on $S^2$ is known as the ``scatter data interpolation problem'' \citep{narcowich1998stability} and preserving the positive definiteness is known as the stability problem \citep{
kunis2009note}. For the Mat\'ern covargioram considered in this manuscript, we adopt a natural and simple approximation using the partial sum of an infinite series. The following theorem controls the approximation error and ensures the positive definiteness of the approximated covariogram.

\begin{theorem}\label{thm:cov2partial}
	For the partial sum
	$$k^L(x,y)=\frac{\sigma^2}{C_{\nu,\alpha}}\sum_{l=0}^L\left(\alpha^2+l(l+1)\right)^{-\nu-1}c_{l}\mathcal{L}_l(\cos(d_M(x,y))),$$
	the approximation error is controlled by 
	$$|k^L(x,y)-k(x,y)|\leq \epsilon\coloneqq \changedreviewertwo{\frac{12\pi\sigma^2}{\sum_l (2l+1)(\alpha^2+l(l+1))^{-\nu-1}} L^{-2\nu}}.$$
	Given observations $x_1,\cdots,x_n$ with minimal separation $q=\inf_{i\neq j}d(x_i,x_j)$, the approximated covariance matrix $\{k^L(x_i,x_j)\}_{ij}$ is positive definite for any 
	$$L>\changedreviewertwo{\left(\frac{12\pi n\sigma^2}{\xi_\rho(q)\sum_l (2l+1)(\alpha^2+l(l+1))^{-\nu-1}}\right)^{\frac{1}{2\nu}}},$$
	where $\xi_\rho(q)$ is a constant depending on the spectral density $\rho$ and minimal separation $q$; see the proof for more details. 
\end{theorem}
\begin{proof}
	See Appendix \ref{app:thm:cov2partial}.
\end{proof}
The above result implies that the computational cost is of order $\epsilon^{-\frac{1}{2\nu}}$ as $\epsilon\to 0$. Larger values of $\nu$ imply smoother random fields that require smaller values of $N$ to approximate the covariogram. In practice, we can first calculate $\xi_\rho(q)$, which is computationally practicable because of the closed-form representation (see Appendix \ref{app:thm:cov2partial} for details), and then choose $N$.

Figure~3(a) presents the covariogram with $\nu=1/2$, $\alpha=1$, and $\sigma^2=1$. Note that $d(x,y)=\pi$ means that $x$ and $y$ are antipodal points so the correlation reaches the minimum. Figure~3(b) shows some simulated $Z$'s with different $\alpha$'s. Similar to $\MM = S^1$, smaller values of $\alpha$ lead to smoother random fields. 


\begin{figure}
	\centering
	\subfloat[]{\includegraphics[width = 0.44\linewidth]{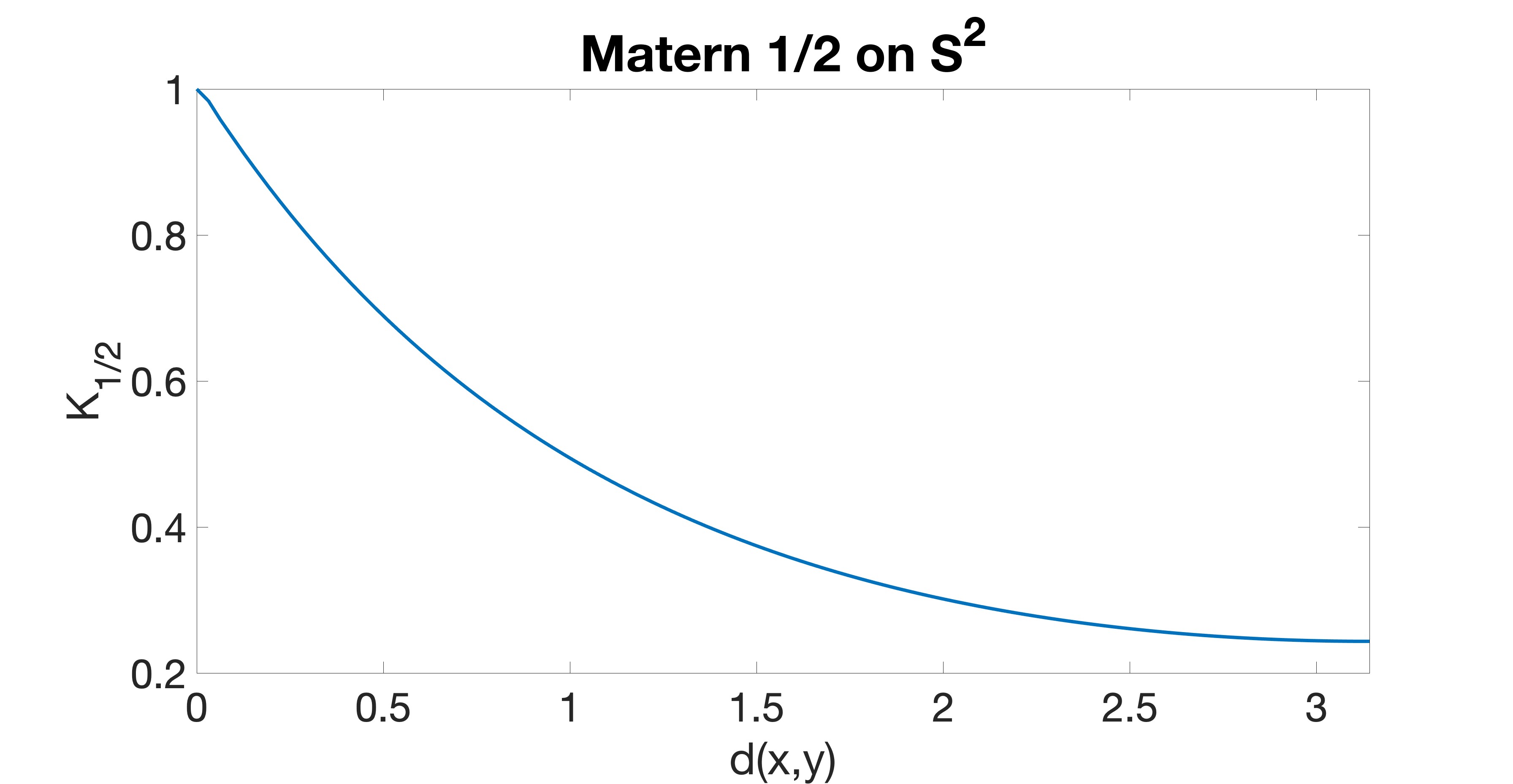}}\label{fig:Mat1/2_S2}
	\subfloat[]{\includegraphics[width = .55\linewidth]{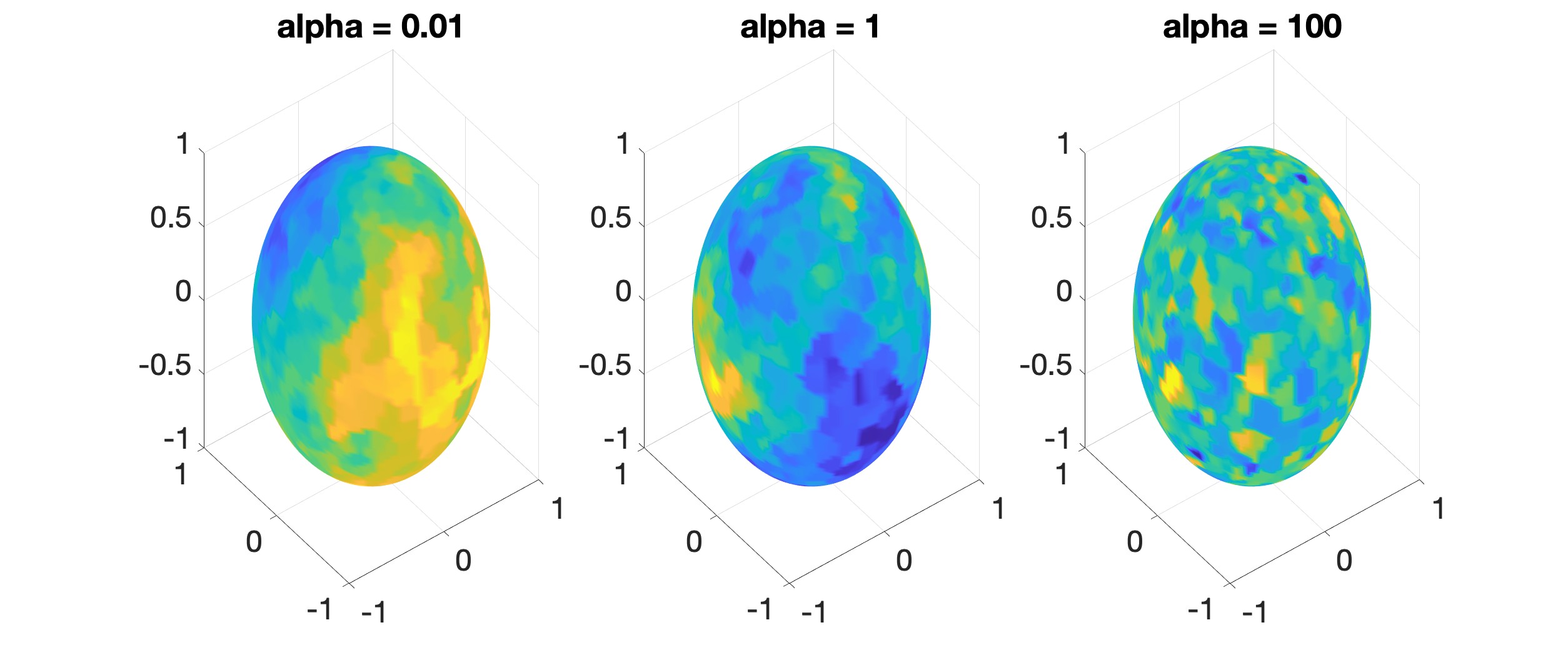}}\label{fig:S2_simulation}
	\caption{(a) Covariogram of Mat\'ern 1/2 on $S^2$; (b): Sample fields with $\sigma^2=0.1$, $\nu=1/2$, $\alpha\in\{0.01,1,100\}$.} 
\end{figure}

However, due to the bias introduced by the partial sum, we do not have access to the ground truth covariogram, so the analogue of Figure~2 is not available anymore. Similar issues arise in approximations to the Mat\'ern on a compact manifold \citep{sanz2020spde}. Instead, we show the theoretical results on microergodic parameters analogous to Corollary~\ref{cly:circle}: 
\begin{corollary}\label{cly:sphere}
	$P_{\theta_1}\equiv P_{\theta_2}$ if and only if $\sigma_1^2/C_{\nu,\alpha_1}=\sigma_2^2/C_{\nu,\alpha_2}$, so neither $\sigma^2$ nor $\alpha$ can be consistently estimated. 
\end{corollary}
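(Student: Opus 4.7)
The statement is essentially the $d=2$ instance of Theorem~\ref{thm:spheres}(1), so my plan is to package the $S^2$-specific spectral data into the general framework of Theorem~\ref{thm:mfd} (or, equivalently, the series test of Theorem~\ref{thm:test}) and then read off the microergodic condition.

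First I would collect the standard Riemannian-geometric facts on $S^2$: the Laplace--Beltrami operator has eigenvalues $\lambda_n = n(n+1)$ with multiplicities $t(n) = 2n+1$, and the corresponding eigenspaces are spanned by the spherical harmonics of degree $n$. This immediately yields the growth rates $\lambda_n = \Theta(n^2)$ and $t(n) = \Theta(n)$ required by Theorem~\ref{thm:mfd}. Moreover, because both $R_1$ and $R_2$ are simultaneously diagonalized by the spherical harmonics (with eigenvalues $\rho_i(n)$), the correlation operators $B_1, B_2$ commute, so the commutativity hypothesis is automatic.

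With the hypotheses verified, I can invoke Theorem~\ref{thm:spheres}(1) directly with $d=2$. If a self-contained derivation is preferred, one plugs the spectral density $\rho_i(n) = \sigma_i^2 C_{\nu,\alpha_i}^{-1}(2\nu\alpha_i^2+n(n+1))^{-\nu-1}$ from Lemma~\ref{lem:sphere} into the series of Theorem~\ref{thm:test} and examines the asymptotics of $\rho_2(n)/\rho_1(n)$. The ratio tends to the constant $A := (\sigma_2^2/C_{\nu,\alpha_2})/(\sigma_1^2/C_{\nu,\alpha_1})$; if $A \neq 1$ the summand $(2n+1)|A-1|^2$ is unbounded and the test series diverges, while if $A=1$ a first-order expansion of the factor $\bigl((2\nu\alpha_1^2+n(n+1))/(2\nu\alpha_2^2+n(n+1))\bigr)^{\nu+1}$ around $1$ gives $\rho_2(n)/\rho_1(n) - 1 = O(n^{-2})$, so the summand is $O(n^{-3})$ and hence summable. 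The resulting criterion $\sigma_1^2/C_{\nu,\alpha_1} = \sigma_2^2/C_{\nu,\alpha_2}$ immediately yields non-identifiability of $\sigma^2$ and $\alpha$ separately, since one can continuously deform $\alpha$ while adjusting $\sigma^2$ so that the microergodic parameter $\sigma^2/C_{\nu,\alpha}$ stays fixed, and all the resulting measures are mutually equivalent.

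I do not foresee any real obstruction: the heavy lifting is done in Theorems~\ref{thm:test} and~\ref{thm:mfd}, and on $S^2$ the spectral data is classical. The only step needing minor care is the Taylor estimate, which must extract the correct $O(n^{-2})$ leading term; this is a routine binomial expansion since the perturbation $2\nu(\alpha_1^2-\alpha_2^2)/(2\nu\alpha_2^2+n(n+1))$ is itself $O(n^{-2})$ and is raised to a fixed power $\nu+1$.
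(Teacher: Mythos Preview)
Your proposal is correct and matches the paper's approach: the paper does not give a separate proof of this corollary, as it is the $d=2$ case of Theorem~\ref{thm:spheres}(1), whose proof (Appendix~\ref{appF}) verifies exactly the spectral data and commutativity you list and then invokes Theorem~\ref{thm:mfd}. Your optional self-contained series-test computation is also fine; note that in the paper's proof of Theorem~\ref{thm:mfd} the ``only if'' direction is handled by an orthogonality argument via Stein's theorem rather than by directly checking divergence of the series, but since Theorem~\ref{thm:test} is stated as an if-and-only-if, your divergence argument is equally valid.
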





\section{Discussion}\label{sec: discussion}
This article has formally developed some theoretical results on statistical inference for Gaussian processes with Mat\'ern covariograms on compact Riemannian manifolds. Our focus has primarily been on the identifiability and consistency (or lack thereof) of the covariogram parameters and of spatial predictions. For the Mat\'ern and squared exponential covariograms, we provide a sufficient and necessary condition for the equivalence of two Gaussian random measures through a series test and derive identifiable and consistently estimable microergodic parameters for an arbitrary dimension $d$. Specifically for $d\leq 3$, we formally establish the consistency of maximum likelihood estimates of the parameters and the asymptotic normality of the best linear unbiased predictor under a misspecified decay parameter. The circle and sphere are analysed as two examples with corroborative numerical experiments. 

We anticipate that the results developed here will generate substantial future work in this domain. For example, as we have alluded to earlier in the article, in Euclidean spaces we know that the maximum likelihood estimate of $\sigma^2$ is asymptotically normal: $\sqrt{n}\left(\frac{\widehat{\sigma}_{1,n}^2}{C_{\nu,\alpha_1}}-\frac{\sigma_0^2}{C_{\nu,\alpha_0}}\right)\to N(0,2)$. While our numerical experiments lead us to conjecture that an analogous result holds for compact Riemannian manifolds, a formal proof may well require substantial new machinery that we intend to explore further. Next, we conjecture that two measures with the Mat\'ern covariogram are equivalent on $\RR^4$ if and only if they have the same decay and spatial variance parameters. We know this result holds for manifolds with $d=4$, but a formal proof for 
$\RR^4$ has not yet been established. Based upon similar reasonings we conjecture that two measures with squared exponential covariograms are equivalent on Euclidean spaces if and only if they have the same decay and spatial variance parameters. 

Another future generalisation is to consider covariograms on compact Riemannian manifolds that are not simultaneously diagonalisable, whose asymptotically optimal linear predictor has been studied in \cite{kirchner2020necessary}. Nevertheless, issues pertaining to the equivalence of measures, derivation of microergodic parameters and consistency of maximum likelihood estimates remain unresolved. Furthermore, covariograms that offer scientific interpretation in practical inference need to be explored. In this regard, it is worth remarking that although our results are primarily concerned with maximum likelihood estimates, they will provide useful insights into Bayesian learning on manifolds. For example, the failure to consistently estimate certain (non-microergodic) parameters will inform Bayesian modellers that inference for such parameters will always be sensitive to their prior specifications. This will open up new avenues of research in specifying prior distributions for microergodic parameters. Formal investigations into the consistency of the posterior distributions of Mat\'ern covariogram parameters on manifolds are of inferential interest and may benefit from some of our developments in the current manuscript. 

Other avenues for future developments will relate to computational efficiency of Gaussian processes on manifolds. Here, a natural candidate for explorations is the tapered covariogram on manifold to introduce sparsity in the covariance matrix \citep{furrer2006covariance}. Since our domain in the current manuscript is compact, unlike in Euclidean domains, further compact truncation is redundant. One can explore the development of new ``tapered'' covariograms that achieve positive-definiteness and sparsity. Other approaches that induce dimension reduction based on conditional expectations, such as Gaussian predictive processes \citep{banerjee2008gpp}, may be explored on compact Riemannian manifolds since these low-dimensional processes are induced by any valid probability measure, although the choice of inputs to define the lower dimensional subspace will need to be addressed. On the other hand, sparse processes resulting from approximations using directed acyclic graphs \citep{datta16} are less natural for modelling data on manifolds since they depend on well-defined neighbours of inputs, which are less obvious to define outside of Euclidean spaces. Nevertheless, \cite{datta16b} developed adaptive Nearest-Neighbour Gaussian processes for massive space-time data sets on Euclidean spaces that selected neighbours using the covariance kernel as a metric for proximity. Such an approach holds promise in modelling massive data sets on manifolds.   

In addition, asymptotic properties of estimates under tapering are of interest and have, hitherto, been explored only in Euclidean domains \citep{kaufman2008covariance, du2009fixed} and without the presence of measurement error processes (``nuggets''). Inference for Gaussian process models with measurement errors (nuggets) on compact manifolds also present novel challenges and can constitute future work. Identifiability and consistency of the nugget in Euclidean spaces have only recently started receiving attention \citep{tang2019identifiability}. However, the developments for Euclidean spaces do not easily apply to compact Riemannian manifolds; hence new tools will need to be developed. On complex or unknown domains, the eigenvalues and eigenfunctions of the Laplacian operator need to be estimated \citep{belkin2007convergence}. Asymptotic analysis of estimation in the spectral domain should be closely related to the frequency domain. 
Finally, since compact manifolds are distinct from non-compact manifolds, both geometrically and topologically, generalisation to non-compact Riemannian manifolds is of interest, where the spectrum is not discrete. Analytic tools on non-compact manifolds will need to be developed. 

\acks{DL would like to thank Viacheslav Borovitskiy, Yidan Xu and Aritra Halder for helpful discussions. DL was supported by NIH/NCATS award UL1 TR002489, NIH/NHLBI award R01 HL149683 and NIH/NIEHS award P30 ES010126. DL and SB were supported by NSF awards DMS-1916349, IIS-1562303, and NIH/NIEHS award R01ES027027. WT acknowledges support from NSF awards DMS-2113779 and DMS-2206038, and from a startup grant at Columbia University.}



\appendix
\section{Proof of Lemma \ref{lem:test}}\label{app:lem:test}

Before proving Lemma \ref{lem:test}, we recall the following lemma \citep[Proposition B, Chapter III][]{yadrenko1983spectral}, also known as the Feldman--H\'ajek theorem:
\begin{lemma}\label{lem:B}
	$P_1\equiv P_2$ if and only if 
	\begin{enumerate}
		\item Operator $D=B_1^{-1/2}B_2B_1^{-1/2}-\mathrm{I}$ is Hilbert--Schmidt;
		\item Eigenvalues of $D$ are strictly greater than $-1$,
	\end{enumerate}
	where $B_i$ is the correlation operator of $P_i$ defined by:
	$$(B_ih)(x)\coloneqq \int_\MM \sum_{l=0}^\infty\rho_i(l) f_l(x)f_l(y){h(y)}\mathrm{dV_g}(y),~h\in L^2(\MM).$$
\end{lemma}

\begin{proof}{\bf{of Lemma \ref{lem:test}}.}
	By Lemma \ref{lem:B}, it suffices to check conditions 1 and 2. Let $\gamma^i_n$ be the eigenvalue of $B_i$ and $d_n$ be the eigenvalue of $D$. Observe that $f_n$ is an eigenfunction of $B_i$ with eigenvalue $\rho_i(n)$:
	\begin{align*}
		(B_if_l)(x)
		& = \int_\MM \sum_{m}\rho_i(m)f_m(x)f_m(y){f_l(y)}\mathrm{dV_g}(y)\\
		& = \sum_m \rho_i(m)f_m(x) \int_M f_m(y){f_l(y)} \mathrm{dV_g}(y)\\
		& = \sum_m \rho_i(m) f_m(x) \<f_m,f_l\>_{\MM}\\
		& = \sum_m \rho_i(m) f_m(x) \delta_{nm}\\
		& = \rho_i(l)f_l(x),
	\end{align*}
	where $\delta$ is the Kronecker delta and $\<\cdot,\cdot\>_\MM$ is the $L^2$ inner product on $\MM$ with $f_n$ being orthonormal basis. 
	
	Since $B_i$'s share the same eigenfunctions and hence commute, we have $d_l = \frac{\gamma^2_l}{\gamma^1_l}-1=\frac{\rho_2(l)}{\rho_1(l)}-1>-1$, so condition 2 holds by the definition of $\rho_i$. For condition 1, observe that $d_l = \frac{\rho_2(l)-\rho_1(l)}{\rho_1(l)}$, so \\
	$$ D\text{ is Hilbert--Schmidt} \Longleftrightarrow \sum_{l}d_l^2<\infty\Longleftrightarrow \sum_{l}\left|\frac{\rho_2(l)-\rho_1(l)}{\rho_1(l)}\right|^2<\infty.$$	
\end{proof}
\section{Proof of Theorem \changedrevieweronetwo{\ref{thm:mfd}}}\label{app:thm:mfd}

\begin{proof}
	We start with (A). First assume that $\nu_1=\nu_2=\nu$ and  $\sigma_1^2/C_{\nu,\alpha_1}=\sigma_2^2/C_{\nu,\alpha_2}$, then observe
	\begin{align*}
		\left|\frac{\rho_2(l)-\rho_1(l)}{\rho_1(l)}\right|&=\left|\frac{(\alpha_1^2+\lambda_l)^{\nu+d/2}}{(\alpha_2^2+\lambda_l)^{\nu+d/2}}-1\right|\\
		&\leq \left|(\alpha_1^2+\lambda_l)^{\nu+d/2}-(\alpha_2^2+\lambda_l)^{\nu+d/2}\right|/\lambda_l^{\nu+d/2}\\
		&\leq \left|((\alpha_1^2/\lambda_l+1)^{\nu+d/2}-((\alpha_2^2/\lambda_l+1)^{\nu+d/2}\right|.
	\end{align*}
	Note that $(1/x+1)^a=1+a /x+O(x^{-2})$ as $x\to\infty$, then \changedreviewertwo{when $l$ is sufficiently large so that $\lambda_l>0$},
	$$\left|(\alpha_1^2/\lambda_l+1)^{\nu+d/2}-(\alpha_2^2/\lambda_l+1)^{\nu+d/2}\right|\leq (\nu+d/2)(\alpha_1^2-\alpha_2^2)\lambda_l^{-1}+O(\lambda_l^{-2})=O(\lambda_l^{-1}).$$
	As a result,
	\begin{align*}
		\sum_{l}\left|\frac{\rho_2(l)-\rho_1(l)}{\rho_1(l)}\right|^2\lesssim \sum_l \lambda_l^{-2}.
	\end{align*}
    By Weyl's law (equation (4.1) in \cite{grebenkov2013geometrical}), $\lambda_l\sim l^{2/d}$, so we have $\lambda_l^{-2}\sim l^{-d/4}$ hence $-4/d<-1$ when $d\leq 3$. By the series test in Lemma \ref{lem:test}, $P_{1}\equiv P_{2}$.
	
	For the other direction, observe that 
	$$\left|\frac{\rho_2(l)}{\rho_1(l)}\right|={\left|\frac{\sigma_2^2C_{\nu_1,\alpha_1}(\alpha_1^2+\lambda_l)^{\nu_1+d/2}}{\sigma_1^2C_{\nu_2,\alpha_2}(\alpha_2^2+\lambda_l)^{\nu_2+d/2}}\right|}\to\begin{cases}\infty & \nu_1<\nu_2 \\
		1 & \nu_1=\nu_2\\
		0 &\nu_1>\nu_2.\end{cases}.$$
	As a result, if $\nu_1\neq \nu_2$, $\sum_{l}\left|\frac{\rho_2(l)}{\rho_1(l)}-1\right|\to\infty$ so $P_1\not\equiv P_2$ by the series test. 
	
	Then assume $\nu_1=\nu_2=\nu$ and $\sigma_1^2/C_{\nu,\alpha_1}\neq\sigma_2^2/C_{\nu,\alpha_2}$. Let $\sigma_0^2=\sigma_2^2\frac{C_{\nu,\alpha_1}}{C_{\nu,\alpha_2}}\neq \sigma_1^2$, then $$\sigma^2_0/C_{\nu,\alpha_1}=\sigma_2^2/C_{\nu,\alpha_2},$$
	so $k(\cdot;\sigma_0^2,\alpha_1)$ and $k(\cdot;\sigma_2^2,\alpha_2)$ define two equivalent measures, denoted by $P_{0}$ and $P_{2}$. Observe that 
	$$k(x,y;\sigma_1^2,\alpha_1)=\frac{\sigma_1^2}{\sigma_0^2}k(x,y;\sigma_0^2,\alpha_1),$$
\changedreviewertwo{then the corresponding spectral densities $\rho_0$ and $\rho_1$ only differ by a multiplicative scalar $\frac{\sigma_1^2}{\sigma_0^2}$ so $\sum_l\left|\frac{\rho_1(l)-\rho_0(l)}{\rho_1(l)}\right|^2=\sum_l \left|\frac{\sigma_1^2-\sigma_0^2}{\sigma_1^2}\right|^2=\infty$. So by Lemma \ref{lem:test},}	
\changedreviewerone{$P_{0}$ is orthogonal to $P_{1}$, so is $P_{2}$, which is equivalent to $P_0$}. Now we conclude that $P_1\equiv P_2$ if and only if $\sigma_1^2/C_{\nu,\alpha_1}=\sigma_2^2/C_{\nu,\alpha_2}$ and $\nu_1=\nu_2$.
	
	Then we show (B). As proved in (A), $P_1\not\equiv P_2$ if $\nu_1\neq \nu_2$ so we assume $\nu_1=\nu_2=\nu$.  Recall that $\lambda_n\to\infty$, so when $n$ is sufficiently large, $\lambda_n>\alpha^2$, then 
	\begin{align}
		&\left|\frac{\rho_2(l)-\rho_1(l)}{\rho_1(l)}\right|=\left|\frac{\sigma_2^2C_{\nu,\alpha_1}(\alpha_1^2+\lambda_l)^{\nu+d/2}}{\sigma_1^2C_{\nu,\alpha_2}(\alpha_2^2+\lambda_l)^{\nu+d/2}}-1\right| \nonumber\\
		&\geq \frac{\left|\sigma_2^2C_{\nu,\alpha_1}(\alpha_1^2+\lambda_l)^{\nu+d/2}-\sigma_1^2C_{\nu,\alpha_2}(\alpha_2^2+\lambda_l)^{\nu+d/2}\right|}{\sigma_1^2C_{\nu,\alpha_2}(2\lambda_l)^{\nu+d/2}}\nonumber\\
		&=2^{-\nu-d/2}\left|\frac{\sigma_2^2C_{\nu,\alpha_1}}{\sigma_1^2C_{\nu,\alpha_2}}\left(\alpha_1^2/\lambda_l+1\right)^{\nu+d/2}-(\alpha_2^2/\lambda_l+1)^{\nu+d/2}\right|\nonumber\\
		& = 2^{-\nu-d/2}\left|\frac{\sigma_2^2C_{\nu,\alpha_1}}{\sigma_1^2C_{\nu,\alpha_2}}-1+\left(\nu+\frac{d}{2}\right)\left(\frac{\sigma_2^2C_{\nu,\alpha_1}}{\sigma_1^2C_{\nu,\alpha_2}}\alpha_1^2-\alpha_2^2\right)\lambda_l^{-1}+O(\lambda_l^{-2})\right| \label{eqn:B}.
	\end{align}
	When $\sigma_1^2\neq \sigma_2^2$ or $\alpha_1\neq \alpha_2$, the constant term  $\frac{\sigma_2^2C_{\nu,\alpha_1}}{\sigma_1^2C_{\nu,\alpha_2}}-1$ and the linear coefficient $\frac{\sigma_2^2C_{\nu,\alpha_1}}{\sigma_1^2C_{\nu,\alpha_2}}\alpha_1^2-\alpha_2^2$ in Equation (\ref{eqn:B}) do not vanish at the same time hence $\left|\frac{\rho_2(l)-\rho_1(l)}{\rho_1(l)}\right|\gtrsim \lambda_{l}^{-1}$. Then $$\sum_{l}\left|\frac{\rho_2(l)-\rho_1(l)}{\rho_1(l)}\right|^2\gtrsim \sum_l \lambda_l^{-2}= \sum_{l} l^{-4/d}=\infty$$
	since $d\geq 4$. By the series test, $P_1\not\equiv P_2$. When $\sigma_1^2=\sigma_2^2$ and $\alpha_1= \alpha_2$, $P_1=P_2$ so $P_1\equiv P_2$, which finises the proof of (B).	
\end{proof}

	

\section{Proof of Theorem \ref{thm:rbf}}\label{app:thm:rbf}
\begin{proof}
	First assume $\alpha_1\neq \alpha_2$, or $\alpha_1<\alpha_2$ without loss of generality, then 
	\begin{align*}
		\left|\frac{\rho_2(l)-\rho_1(l)}{\rho_1(l)}\right|=\left|\frac{\sigma_2^2C_{\alpha_1}}{\sigma_1^2C_{\alpha_2}}e^{-\frac{\lambda_l}{2}\left(\frac{1}{\alpha_2^2}-\frac{1}{\alpha_1^2}\right)}-1\right|\to\infty
	\end{align*}
	since $\lambda_l\to \infty$. As a result,
	$$\sum_l \left|\frac{\rho_2(l)-\rho_1(l)}{\rho_1(l)}\right|^2=\infty.$$
	
	Then assume $\alpha_1=\alpha_2$ but $\sigma^2_1\neq \sigma_2^2$, similarly, 
	$$\sum_l \left|\frac{\rho_2(l)-\rho_1(l)}{\rho_1(l)}\right|^2=\sum_l \left(\frac{\sigma^2_2}{\sigma_1^2}-1\right)^2=\infty.$$
	Then the series test applies. 
\end{proof}

\section{Proof of Theorem \ref{thm:consist}}\label{app:thm:consist}
\begin{proof}
	Let $\sigma_1^2 = \frac{\sigma_0^2C_{\nu,\alpha_1}}{C_{\nu,\alpha_0}}$ so $P_0\equiv P_1$ by Theorem \ref{thm:mfd}. It suffices to show $\widehat{\sigma}_{1,n}^2\to\sigma_1^2$, $P_1$ a.s. Recall that $\widehat{\sigma}_{1,n}^2=\frac{Z_n^\T\Gamma_n^{-1}(\alpha_1)Z_n}{n}$ and  $Z_n\sim N(0,\sigma_1^2\Gamma_n(\alpha_1))$ under $P_1$, where $(\Gamma_n(\alpha))_{i,j}=\frac{1}{C_{\nu,\alpha}}\sum_{l=0}^\infty \left(2\nu\alpha^2+\lambda_l\right)^{-\nu-\frac{d}{2}}f_l(x_i)f_l(x_j)$. As a result, $\widehat{\sigma}_{1,n}^2=\sigma_1^2\frac{\chi_n^2}{n}\to \sigma_1^2$, $P_1$ a.s., as $n\to\infty$.
\end{proof}

\section{Proof of Lemma \ref{lem:prediction}}\label{app:lem:prediction}
\begin{proof}
The logic of the proof is similar to the proof of Theorem 1 and 2 in \cite{stein1993simple}. However, these two theorems are not directly applicable due to the discreteness of spectrum in our case. To be more specific, 
the key construction in the proof of \cite{stein1993simple} is the following. By the assumption, for any $\varepsilon>0$, there exists $M_\varepsilon>0$ such that $\sup_{m\geq M_\varepsilon}\left|\frac{\rho_1(m)}{c\rho_0(m)}-1\right|<\varepsilon$. We define $$\eta_\varepsilon(m)\coloneqq \begin{cases} \frac{1}{c}\rho_1(m) & m\leq M_\varepsilon\\
		\rho_0(m) & m>M_\varepsilon
	\end{cases}.$$
That is, $\eta_\varepsilon$ differs from $\rho_0$ only on a bounded subset of $\NN$.  Note that in \cite{stein1993simple}, the key step is to show $P_{\eta_\varepsilon}\equiv P_{\rho_0}$, and the rest of the proof will not rely on any special structure of the Euclidean domain anymore. That is, it suffices to show $P_{\eta_\varepsilon}\equiv P_{\rho_0}$, which is a direct consequence of the series test in Lemma 3. The rest of the proof of (i) naturally follows the proof of Theorem 1 in \cite{stein1993simple} while the proof of (ii) follows the proof of Theorem 2 in \cite{stein1993simple}, where $e(x_0,n,f_1)$ in \cite{stein1993simple} corresponds to $Z_0-\widehat{Z}_n(\rho_1)$ in our paper.
\end{proof}

\section{Proof of Theorem \ref{thm:prediction}}\label{app:thm:prediction}
\begin{proof}
	For $\sigma_1^2=\frac{\sigma_0^2C_{\nu,\alpha_1}}{C_{\nu,\alpha_0}}$, let $\rho_1$  and $\rho_0$ be the spectral density of the Gaussian process parametrised by $(\alpha_1,\sigma_1^2)$ and $(\alpha_0,\sigma_0^2)$ hence $\rho_1/\rho_0\to 1$. Then by (ii) in Lemma \ref{lem:prediction}, 
	$$\frac{\EE_{\sigma^2_1,\alpha_1}(\widehat{Z}_n(\alpha_1)-Z_0)^2}{\EE_{\sigma^2_0,\alpha_0}(\widehat{Z}_n(\alpha_1)-Z_0)^2}\to 1.$$
	Observe that 
	\begin{equation}\label{eqn:consist}
		\frac{\EE_{\widehat{\sigma}^2_{1,n},\alpha_1}(\widehat{Z}_n(\alpha_1)-Z_0)^2}{\EE_{\sigma^2_0,\alpha_0}(\widehat{Z}_n(\alpha_1)-Z_0)^2}   = \frac{\EE_{\widehat{\sigma}^2_{1,n},\alpha_1}(\widehat{Z}_n(\alpha_1)-Z_0)^2}{\EE_{\sigma^2_1,\alpha_1}(\widehat{Z}_n(\alpha_1)-Z_0)^2} \frac{\EE_{\sigma^2_1,\alpha_1}(\widehat{Z}_n(\alpha_1)-Z_0)^2}{\EE_{\sigma^2_0,\alpha_0}(\widehat{Z}_n(\alpha_1)-Z_0)^2}. 
	\end{equation}
	The second term in Equation (\ref{eqn:consist}) tends to $1$. For the first term, by the definition of $\widehat{Z}_n$, we obtain
	$$\EE_{\widehat{\sigma}^2_{1,n},\alpha_1}(\widehat{Z}_n(\alpha_1)-Z_0)^2=\widehat{\sigma}^2_{1,n}\left(1-\gamma_n(\alpha_1)^\T \Gamma_n(\alpha_1)^{-1}\gamma_n(\alpha_1)\right).$$
	Hence, the first term in Equation (\ref{eqn:consist}) is $\frac{\widehat{\sigma}_{1,n}^2}{\sigma_1^2}$. Similar to the proof of Theorem~\ref{thm:consist}, $\widehat{\sigma}_{1,n}^2=\sigma_1^2\frac{\chi_n^2}{n}\to \sigma_1^2$, $P_1\coloneqq P_{\sigma_1^2,\alpha_1}$ a.s. By Theorem~\ref{thm:mfd}, $P_0\equiv P_1$, so the left hand side of Equation (\ref{eqn:consist}) tends to $1$, $P_0$ a.s.
\end{proof}

\section{Proof of Theorem \ref{thm:spheres}}\label{app:thm:spheres}
\begin{proof}
$d$-dimensional spheres are compact	Riemannian manifolds. The eigenfunctions of the Laplace operator on $S^d$ are known as spherical harmonics, denoted by $S^l_m$, $m=0,1,\cdots$, $l=1,\cdots, t_d(m)$. The corresponding eigenvalues are $l(l+d-1)=O(l^2)$ with multiplicity \citep{muller2006spherical,frye2012spherical}
	$$\frac{2l+d-1}{l} \binom{l+d-2}{l-1} = O(l^{d-1}).$$ 
	So 1, 2, 3 follow directly from Theorem \ref{thm:mfd}, \ref{thm:consist} and \ref{thm:prediction} respectively.
\end{proof}

\section{Proof of Theorem {\ref{thm:cov2partial}}}\label{app:thm:cov2partial}
\begin{proof}
	First we reformulate the covariogram as
	\begin{align*}
		k(x,y)&=\frac{\sigma^2}{C_{\nu,\alpha}}\sum_{l=0}^\infty\left(\alpha^2+l(l+1)\right)^{-\nu-1}c_{l}\mathcal{L}_l(\cos(d_M(x,y))) = C\sum_{l=0}^\infty a_l(z)\;,
	\end{align*}
	where $C=\frac{\Gamma(3/2)\sigma^2}{2\pi^{3/2}C_{\nu,\alpha}}\changedreviewertwo{=\frac{4\pi\sigma^2}{\sum_{l} (2l+1)(\alpha^2+l(l+1))^{-\nu-1}}}$, $z=\cos(d_M(x,y))$ and
	$$a_l(z)=\left(\alpha^2+l(l+1)\right)^{-\nu-1}(2l+1)\mathcal{L}_l(z).$$
	 Observe that $\mathcal{L}_l(z)\in[-1,1]$. Therefore,
	\begin{align*}
		|a_l(z)|&\leq \left(\alpha^2+l(l+1)\right)^{-\nu-1}(2l+1).
	\end{align*}
	As a result,
	\begin{align*}
		|k^L(x,y)-k(x,y)|&\leq C\sum_{l=L+1}^\infty |a_l(z)|\leq \changedreviewertwo{C\sum_{l=L+1}^\infty\left(\alpha^2+l(l+1)\right)^{-\nu-1}(2l+1)}\\
		&\leq \changedreviewertwo{C}\sum_{l=L+1}^\infty \changedreviewertwo{(l^2)^{-\nu-1}(3l)}=3C\sum_{l=L+1}^\infty l^{-2\nu-1}\leq 3C\int_{L+1}^\infty t^{-2\nu-1}\mathrm{d}t\\
		& \changedreviewertwo{\leq \frac{3C}{2\nu} L^{-2\nu}=\frac{6\pi\sigma^2}{\nu\sum_{l} (2l+1)(\alpha^2+l(l+1))^{-\nu-1}}L^{-2\nu}}.
	\end{align*}
	That is, if the target approximation error is $\epsilon$, then we can truncate the infinite sum at
	$$L = \changedreviewertwo{\floor{\left(\frac{6\pi\sigma^2}{\epsilon\nu\sum_{l} (2l+1)(\alpha^2+l(l+1))^{-\nu-1}}\right)^{\frac{1}{2\nu}}}+1}.$$
	To prove positive definiteness, we first find the lower bound of the minimal eigenvalue of the covariance matrix $\Sigma\coloneqq\{k(x_i,x_j)\}_{ij}$, denoted by $\lambda_{\min}$. By Theorem 2.8 (i) in \cite{narcowich1998stability}, $$\lambda_{\min}\geq \xi_\rho(q)\coloneqq \Gamma_\rho(K)\left(1-\frac{\pi^3k(q)}{4q}\left(\frac{\sin(q/2)}{q/2}\right)^K\right),$$
	where 
	$$k(q)=\max\left\{\frac{8\pi^2}{q},\frac{25\pi}{2}\right\},~ K=\underset{m\in\NN}{\arg\min} \left\{m:1-\frac{\pi^3k(q)}{4q}\left(\frac{\sin(q/2)}{q/2}\right)^m>0\right\},$$
	and $\Gamma_\rho(K)$ is determined by the spectral density $\rho$, $K$ and the B-spline, see Equation (2.41) in \cite{narcowich1998stability} for further details (\changedreviewertwo{where $m=2$ in our setting}). Let the truncated covariance function be $\Sigma^L\coloneqq\{k^L(x_i,x_j)\}_{ij}$ with minimal eigenvalue $\lambda^N_{\min}$, then by the first half of the proof, 
	$$\lambda^L_{\min}\geq \lambda_{\min}-\|\Sigma-\Sigma^L\|\changedrevieweronetwo{\geq \xi_\rho(q)-n\|\Sigma-\Sigma^L\|_{\max}\geq\xi_\rho(q)-\frac{6\pi n\sigma^2 L^{-2\nu}}{\nu\sum_{l} (2l+1)(\alpha^2+l(l+1))^{-\nu-1}}}$$
    The second inequality follows from a matrix norm equivalence: $\|A\|_{\max}\leq\|A\|\leq n\|A\|_{\max}$ for any $n\times n$ matrix $A$. The first inequality relies on the fact that $\eig_{\min}(A)\geq \eig_{\min}(B)-\|A-B\|$ for symmetric matrices $A$ and $B$. Note that $\eig_{\min}(A)=\min_{\|x\|=1}x^\top Ax$ and let $x_0$ be the eigenvector of $A$ associated with the smallest eigenvalue, that is, $Ax_0 = \eig_{\min}(A)x_0$. By the same observation, $x_0^\top Bx_0\geq \eig_{\min}(B)$. Then,
$$\eig_{\min}(A) = x_0^\top Ax_0 = x_0^\top (B+A-B)x_0=x_0^\top B x_0 + x_0^\top (A-B)x_0\geq \eig_{\min}(B)+x_0^\top (A-B)x_0.$$
For the last term, since $\|A-B\|=\max_{\|x\|=1}|x^\top (A-B)x|$, we have
$|x_0^\top (A-B)x_0|\leq \|A-B\|$, hence $x_0^\top(A-B)x_0\geq -\|A-B\|$ as desired. \changedrevieweronetwo{Let $\xi_\rho(q)-\frac{6\pi n\sigma^2L^{-2\nu}}{\nu\sum_{l} (2l+1)(\alpha^2+l(l+1))^{-\nu-1}}>0$, we have}
    $$\changedrevieweronetwo{L> \left(\frac{6\pi n\sigma^2}{\nu\xi_\rho(q)\sum_l (2l+1)(\alpha^2+l(l+1))^{-\nu-1}}\right)^{\frac{1}{2\nu}}}$$
	
\end{proof}

\vskip 0.2in
\bibliography{ref}

\end{document}